\documentclass[12pt]{amsart}       
\usepackage{txfonts}
\usepackage{amssymb}
\usepackage{eucal}
\usepackage{graphicx}
\usepackage{amsmath}
\usepackage{amscd}
\usepackage[all]{xy}           
\usepackage{tikz}
\usepackage{amsfonts,latexsym}
\usepackage{xspace}
\usepackage{epsfig}
\usepackage{float}
\usepackage{axodraw}
\usepackage{color}
\usepackage{fancybox}
\usepackage{colordvi}
\usepackage{multicol}
\usepackage{colordvi}
\usepackage[active]{srcltx} 

\topmargin -.8cm \textheight 22.8cm \oddsidemargin 0cm \evensidemargin -0cm \textwidth 16.3cm



\newtheorem{theorem}{Theorem}[section]
\newtheorem{prop}[theorem]{Proposition}
\theoremstyle{definition}
\newtheorem{defn}[theorem]{Definition}
\newtheorem{lemma}[theorem]{Lemma}

\newtheorem{prop-def}{Proposition-Definition}[section]
\newtheorem{coro-def}{Corollary-Definition}[section]

\newtheorem{remark}[theorem]{Remark}


\newcommand{\nc}{\newcommand}
\nc{\tred}[1]{\textcolor{red}{#1}}
\nc{\tblue}[1]{\textcolor{blue}{#1}}
\nc{\tgreen}[1]{\textcolor{green}{#1}}
\nc{\tpurple}[1]{\textcolor{purple}{#1}}
\nc{\btred}[1]{\textcolor{red}{\bf #1}}
\nc{\btblue}[1]{\textcolor{blue}{\bf #1}}
\nc{\btgreen}[1]{\textcolor{green}{\bf #1}}
\nc{\btpurple}[1]{\textcolor{purple}{\bf #1}}
\nc{\NN}{{\mathbb N}}
\nc{\ncsha}{{\mbox{\cyr X}^{\mathrm NC}}} \nc{\ncshao}{{\mbox{\cyr
X}^{\mathrm NC}_0}}

\renewcommand{\frak}{\mathfrak}

\newcommand{\efootnote}[1]{}

\renewcommand{\textbf}[1]{}

\newcommand{\delete}[1]{}

\nc{\mlabel}[1]{\label{#1}}  
\nc{\mcite}[1]{\cite{#1}}  
\nc{\mref}[1]{\ref{#1}}  
\nc{\mbibitem}[1]{\bibitem{#1}} 

\delete{
\nc{\mlabel}[1]{\label{#1}  
{\hfill \hspace{1cm}{\small\tt{{\ }\hfill(#1)}}}}
\nc{\mcite}[1]{\cite{#1}{\small{\tt{{\ }(#1)}}}}  
\nc{\mref}[1]{\ref{#1}{{\tt{{\ }(#1)}}}}  
\nc{\mbibitem}[1]{\bibitem[\bf #1]{#1}} 
}


\nc{\opa}{\ast} \nc{\opb}{\odot} \nc{\op}{\bullet} \nc{\pa}{\frakL}
\nc{\arr}{\rightarrow} \nc{\lu}[1]{(#1)} \nc{\mult}{\mrm{mult}}
\nc{\diff}{\mathfrak{Diff}}
\nc{\opc}{\sharp}\nc{\opd}{\natural}
\nc{\ope}{\circ}
\nc{\dpt}{\mathrm{d}}
\nc{\diam}{alternating\xspace}
\nc{\Diam}{Alternating\xspace}
\nc{\cdiam}{alternating\xspace}
\nc{\Cdiam}{Alternating\xspace}
\nc{\AW}{\mathcal{A}}
\nc{\rba}{Rota-Baxter algebra\xspace}

\nc{\ari}{\mathrm{ar}}

\nc{\lef}{\mathrm{lef}}

\nc{\Sh}{\mathrm{ST}}

\nc{\Cr}{\mathrm{Cr}}

\nc{\st}{{Schr\"oder tree}\xspace}
\nc{\sts}{{Schr\"oder trees}\xspace}

\nc{\vertset}{\Omega} 

\nc{\assop}{\quad \begin{picture}(5,5)(0,0)
\line(-1,1){10}
\put(-2.2,-2.2){$\bullet$}
\line(0,-1){10}\line(1,1){10}
\end{picture} \quad \smallskip}

\nc{\operator}{\begin{picture}(5,5)(0,0)
\line(0,-1){6}
\put(-2.6,-1.8){$\bullet$}
\line(0,1){9}
\end{picture}}

\nc{\idx}{\begin{picture}(6,6)(-3,-3)
\put(0,0){\line(0,1){6}}
\put(0,0){\line(0,-1){6}}
 \end{picture}}

\nc{\pb}{{\mathrm{pb}}}
\nc{\Lf}{{\mathrm{Lf}}}

\nc{\lft}{{left tree}\xspace}
\nc{\lfts}{{left trees}\xspace}

\nc{\fat}{{fundamental averaging tree}\xspace}

\nc{\fats}{{fundamental averaging trees}\xspace}
\nc{\avt}{\mathrm{Avt}}

\nc{\rass}{{\mathit{RAss}}}

\nc{\aass}{{\mathit{AAss}}}

\nc{\vin}{{\mathrm Vin}}    
\nc{\lin}{{\mathrm Lin}}    
\nc{\inv}{\mathrm{I}n}
\nc{\gensp}{V} 
\nc{\genbas}{\mathcal{V}} 
\nc{\bvp}{V_P}     
\nc{\gop}{{\,\omega\,}}     

\nc{\bin}[2]{ (_{\stackrel{\scs{#1}}{\scs{#2}}})}  
\nc{\binc}[2]{ \left (\!\! \begin{array}{c} \scs{#1}\\
    \scs{#2} \end{array}\!\! \right )}  
\nc{\bincc}[2]{  \left ( {\scs{#1} \atop
    \vspace{-1cm}\scs{#2}} \right )}  
\nc{\bs}{\bar{S}} \nc{\cosum}{\sqsubset} \nc{\la}{\longrightarrow}
\nc{\rar}{\rightarrow} \nc{\dar}{\downarrow} \nc{\dprod}{**}
\nc{\dap}[1]{\downarrow \rlap{$\scriptstyle{#1}$}}
\nc{\md}{\mathrm{dth}} \nc{\uap}[1]{\uparrow
\rlap{$\scriptstyle{#1}$}} \nc{\defeq}{\stackrel{\rm def}{=}}
\nc{\disp}[1]{\displaystyle{#1}} \nc{\dotcup}{\
\displaystyle{\bigcup^\bullet}\ } \nc{\gzeta}{\bar{\zeta}}
\nc{\hcm}{\ \hat{,}\ } \nc{\hts}{\hat{\otimes}}
\nc{\barot}{{\otimes}} \nc{\free}[1]{\bar{#1}}
\nc{\uni}[1]{\tilde{#1}} \nc{\hcirc}{\hat{\circ}} \nc{\lleft}{[}
\nc{\lright}{]} \nc{\lc}{\lfloor} \nc{\rc}{\rfloor}
\nc{\curlyl}{\left \{ \begin{array}{c} {} \\ {} \end{array}
    \right .  \!\!\!\!\!\!\!}
\nc{\curlyr}{ \!\!\!\!\!\!\!
    \left . \begin{array}{c} {} \\ {} \end{array}
    \right \} }
\nc{\longmid}{\left | \begin{array}{c} {} \\ {} \end{array}
    \right . \!\!\!\!\!\!\!}
\nc{\onetree}{\bullet} \nc{\ora}[1]{\stackrel{#1}{\rar}}
\nc{\ola}[1]{\stackrel{#1}{\la}}
\nc{\ot}{\otimes} \nc{\mot}{{{\boxtimes\,}}}
\nc{\otm}{\overline{\boxtimes}} \nc{\sprod}{\bullet}
\nc{\scs}[1]{\scriptstyle{#1}} \nc{\mrm}[1]{{\rm #1}}
\nc{\margin}[1]{\marginpar{\rm #1}}   
\nc{\dirlim}{\displaystyle{\lim_{\longrightarrow}}\,}
\nc{\invlim}{\displaystyle{\lim_{\longleftarrow}}\,}
\nc{\mvp}{\vspace{0.3cm}} \nc{\tk}{^{(k)}} \nc{\tp}{^\prime}
\nc{\ttp}{^{\prime\prime}} \nc{\svp}{\vspace{2cm}}
\nc{\vp}{\vspace{8cm}} \nc{\proofbegin}{\noindent{\bf Proof: }}
\nc{\proofend}{$\blacksquare$ \vspace{0.3cm}}
\nc{\modg}[1]{\!<\!\!{#1}\!\!>}
\nc{\intg}[1]{F_C(#1)} \nc{\lmodg}{\!
<\!\!} \nc{\rmodg}{\!\!>\!}
\nc{\cpi}{\widehat{\Pi}}
\nc{\sha}{{\mbox{\cyr X}}}  
\nc{\shap}{{\mbox{\cyrs X}}} 
\nc{\shpr}{\diamond}    
\nc{\shp}{\ast} \nc{\shplus}{\shpr^+}
\nc{\shprc}{\shpr_c}    
\nc{\msh}{\ast} \nc{\zprod}{m_0} \nc{\oprod}{m_1}
\nc{\vep}{\varepsilon} \nc{\labs}{\mid\!} \nc{\rabs}{\!\mid}
\nc{\sqmon}[1]{\langle #1\rangle}

\nc{\mmbox}[1]{\mbox{\ #1\ }} \nc{\dep}{\mrm{dep}} \nc{\fp}{\mrm{FP}}
\nc{\rchar}{\mrm{char}} \nc{\End}{\mrm{End}} \nc{\Fil}{\mrm{Fil}}
\nc{\Mor}{Mor\xspace} \nc{\gmzvs}{gMZV\xspace}
\nc{\gmzv}{gMZV\xspace} \nc{\mzv}{MZV\xspace}
\nc{\mzvs}{MZVs\xspace} \nc{\Hom}{\mrm{Hom}} \nc{\id}{\mrm{id}}
\nc{\im}{\mrm{im}} \nc{\incl}{\mrm{incl}} \nc{\map}{\mrm{Map}}
\nc{\mchar}{\rm char} \nc{\nz}{\rm NZ} \nc{\supp}{\mathrm Supp}

\nc{\Alg}{\mathbf{Alg}} \nc{\Bax}{\mathbf{Bax}} \nc{\bff}{\mathbf f}
\nc{\bfk}{{\bf k}} \nc{\bfone}{{\bf 1}} \nc{\bfx}{\mathbf x}
\nc{\bfy}{\mathbf y}
\nc{\base}[1]{\bfone^{\otimes ({#1}+1)}} 
\nc{\Cat}{\mathbf{Cat}}

\nc{\detail}{\marginpar{\bf More detail}
    \noindent{\bf Need more detail!}
    \svp}
\nc{\Int}{\mathbf{Int}} \nc{\Mon}{\mathbf{Mon}}
\nc{\rbtm}{{shuffle }} \nc{\rbto}{{Rota-Baxter }}
\nc{\remarks}{\noindent{\bf Remarks: }} \nc{\Rings}{\mathbf{Rings}}
\nc{\Sets}{\mathbf{Sets}} \nc{\wtot}{\widetilde{\odot}}
\nc{\wast}{\widetilde{\ast}} \nc{\bodot}{\bar{\odot}}
\nc{\bast}{\bar{\ast}} \nc{\hodot}[1]{\odot^{#1}}
\nc{\hast}[1]{\ast^{#1}} \nc{\mal}{\mathcal{O}}
\nc{\tet}{\tilde{\ast}} \nc{\teot}{\tilde{\odot}}
\nc{\oex}{\overline{x}} \nc{\oey}{\overline{y}}
\nc{\oez}{\overline{z}} \nc{\oef}{\overline{f}}
\nc{\oea}{\overline{a}} \nc{\oeb}{\overline{b}}
\nc{\weast}[1]{\widetilde{\ast}^{#1}}
\nc{\weodot}[1]{\widetilde{\odot}^{#1}} \nc{\hstar}[1]{\star^{#1}}
\nc{\lae}{\langle} \nc{\rae}{\rangle}
\nc{\lf}{\lfloor}
\nc{\rf}{\rfloor}


\def\ta1{{\scalebox{0.25}{ 
\begin{picture}(12,12)(38,-38)
\SetWidth{0.5} \SetColor{Black} \Vertex(45,-33){5.66}
\end{picture}}}}

\def\tb2{{\scalebox{0.25}{ 
\begin{picture}(12,42)(38,-38)
\SetWidth{0.5} \SetColor{Black} \Vertex(45,-3){5.66}
\SetWidth{1.0} \Line(45,-3)(45,-33) \SetWidth{0.5}
\Vertex(45,-33){5.66}
\end{picture}}}}

\def\tc3{{\scalebox{0.25}{ 
\begin{picture}(12,72)(38,-38)
\SetWidth{0.5} \SetColor{Black} \Vertex(45,27){5.66}
\SetWidth{1.0} \Line(45,27)(45,-3) \SetWidth{0.5}
\Vertex(45,-33){5.66} \SetWidth{1.0} \Line(45,-3)(45,-33)
\SetWidth{0.5} \Vertex(45,-3){5.66}
\end{picture}}}}

\def\td31{{\scalebox{0.25}{ 
\begin{picture}(42,42)(23,-38)
\SetWidth{0.5} \SetColor{Black} \Vertex(45,-3){5.66}
\Vertex(30,-33){5.66} \Vertex(60,-33){5.66} \SetWidth{1.0}
\Line(45,-3)(30,-33) \Line(60,-33)(45,-3)
\end{picture}}}}

\def\te4{{\scalebox{0.25}{ 
\begin{picture}(12,102)(38,-8)
\SetWidth{0.5} \SetColor{Black} \Vertex(45,57){5.66}
\Vertex(45,-3){5.66} \Vertex(45,27){5.66} \Vertex(45,87){5.66}
\SetWidth{1.0} \Line(45,57)(45,27) \Line(45,-3)(45,27)
\Line(45,57)(45,87)
\end{picture}}}}

\def\tf41{{\scalebox{0.25}{ 
\begin{picture}(42,72)(38,-8)
\SetWidth{0.5} \SetColor{Black} \Vertex(45,27){5.66}
\Vertex(45,-3){5.66} \SetWidth{1.0} \Line(45,27)(45,-3)
\SetWidth{0.5} \Vertex(60,57){5.66} \SetWidth{1.0}
\Line(45,27)(60,57) \SetWidth{0.5} \Vertex(75,27){5.66}
\SetWidth{1.0} \Line(75,27)(60,57)
\end{picture}}}}

\def\tg42{{\scalebox{0.25}{ 
\begin{picture}(42,72)(8,-8)
\SetWidth{0.5} \SetColor{Black} \Vertex(45,27){5.66}
\Vertex(45,-3){5.66} \SetWidth{1.0} \Line(45,27)(45,-3)
\SetWidth{0.5} \Vertex(15,27){5.66} \Vertex(30,57){5.66}
\SetWidth{1.0} \Line(15,27)(30,57) \Line(45,27)(30,57)
\end{picture}}}}

\def\th43{{\scalebox{0.25}{ 
\begin{picture}(42,42)(8,-8)
\SetWidth{0.5} \SetColor{Black} \Vertex(45,-3){5.66}
\Vertex(15,-3){5.66} \Vertex(30,27){5.66} \SetWidth{1.0}
\Line(15,-3)(30,27) \Line(45,-3)(30,27) \Line(30,27)(30,-3)
\SetWidth{0.5} \Vertex(30,-3){5.66}
\end{picture}}}}

\def\thII43{{\scalebox{0.25}{ 
\begin{picture}(72,57) (68,-128)
    \SetWidth{0.5}
    \SetColor{Black}
    \Vertex(105,-78){5.66}
    \SetWidth{1.5}
    \Line(105,-78)(75,-123)
    \Line(105,-78)(105,-123)
    \Line(105,-78)(135,-123)
    \SetWidth{0.5}
    \Vertex(75,-123){5.66}
    \Vertex(105,-123){5.66}
    \Vertex(135,-123){5.66}
  \end{picture}
  }}}

\def\thj44{{\scalebox{0.25}{ 
\begin{picture}(42,72)(8,-8)
\SetWidth{0.5} \SetColor{Black} \Vertex(30,57){5.66}
\SetWidth{1.0} \Line(30,57)(30,27) \SetWidth{0.5}
\Vertex(30,27){5.66} \SetWidth{1.0} \Line(45,-3)(30,27)
\SetWidth{0.5} \Vertex(45,-3){5.66} \Vertex(15,-3){5.66}
\SetWidth{1.0} \Line(15,-3)(30,27)
\end{picture}}}}

\def\ti5{{\scalebox{0.25}{ 
\begin{picture}(12,132)(23,-8)
\SetWidth{0.5} \SetColor{Black} \Vertex(30,117){5.66}
\SetWidth{1.0} \Line(30,117)(30,87) \SetWidth{0.5}
\Vertex(30,87){5.66} \Vertex(30,57){5.66} \Vertex(30,27){5.66}
\Vertex(30,-3){5.66} \SetWidth{1.0} \Line(30,-3)(30,27)
\Line(30,27)(30,57) \Line(30,87)(30,57)
\end{picture}}}}

\def\tj51{{\scalebox{0.25}{ 
\begin{picture}(42,102)(53,-38)
\SetWidth{0.5} \SetColor{Black} \Vertex(61,27){4.24}
\SetWidth{1.0} \Line(75,57)(90,27) \Line(60,27)(75,57)
\SetWidth{0.5} \Vertex(90,-3){5.66} \Vertex(60,27){5.66}
\Vertex(75,57){5.66} \Vertex(90,-33){5.66} \SetWidth{1.0}
\Line(90,-33)(90,-3) \Line(90,-3)(90,27) \SetWidth{0.5}
\Vertex(90,27){5.66}
\end{picture}}}}

\def\tk52{{\scalebox{0.25}{ 
\begin{picture}(42,102)(23,-8)
\SetWidth{0.5} \SetColor{Black} \Vertex(60,57){5.66}
\Vertex(45,87){5.66} \SetWidth{1.0} \Line(45,87)(60,57)
\SetWidth{0.5} \Vertex(30,57){5.66} \SetWidth{1.0}
\Line(30,57)(45,87) \SetWidth{0.5} \Vertex(30,-3){5.66}
\SetWidth{1.0} \Line(30,-3)(30,27) \SetWidth{0.5}
\Vertex(30,27){5.66} \SetWidth{1.0} \Line(30,57)(30,27)
\end{picture}}}}

\def\tl53{{\scalebox{0.25}{ 
\begin{picture}(42,102)(8,-8)
\SetWidth{0.5} \SetColor{Black} \Vertex(30,57){5.66}
\Vertex(30,27){5.66} \SetWidth{1.0} \Line(30,57)(30,27)
\SetWidth{0.5} \Vertex(30,87){5.66} \SetWidth{1.0}
\Line(30,27)(45,-3) \SetWidth{0.5} \Vertex(15,-3){5.66}
\SetWidth{1.0} \Line(15,-3)(30,27) \Line(30,57)(30,87)
\SetWidth{0.5} \Vertex(45,-3){5.66}
\end{picture}}}}

\def\tm54{{\scalebox{0.25}{ 
\begin{picture}(42,72)(8,-38)
\SetWidth{0.5} \SetColor{Black} \Vertex(30,-3){5.66}
\SetWidth{1.0} \Line(30,27)(30,-3) \Line(30,-3)(45,-33)
\SetWidth{0.5} \Vertex(15,-33){5.66} \SetWidth{1.0}
\Line(15,-33)(30,-3) \SetWidth{0.5} \Vertex(45,-33){5.66}
\SetWidth{1.0} \Line(30,-33)(30,-3) \SetWidth{0.5}
\Vertex(30,-33){5.66} \Vertex(30,27){5.66}
\end{picture}}}}

\def\tn55{{\scalebox{0.25}{ 
\begin{picture}(42,72)(8,-38)
\SetWidth{0.5} \SetColor{Black} \Vertex(15,-33){5.66}
\Vertex(45,-33){5.66} \Vertex(30,27){5.66} \SetWidth{1.0}
\Line(45,-33)(45,-3) \SetWidth{0.5} \Vertex(45,-3){5.66}
\Vertex(15,-3){5.66} \SetWidth{1.0} \Line(30,27)(45,-3)
\Line(15,-3)(30,27) \Line(15,-3)(15,-33)
\end{picture}}}}


\nc{\QQ}{{\mathbb Q}}
\nc{\RR}{{\mathbb R}} \nc{\ZZ}{{\mathbb Z}}


\nc{\cala}{{\mathcal A}} \nc{\calb}{{\mathcal B}}
\nc{\calc}{{\mathcal C}}
\nc{\cald}{{\mathcal D}} \nc{\cale}{{\mathcal E}}
\nc{\calf}{{\mathcal F}} \nc{\calg}{{\mathcal G}}
\nc{\calh}{{\mathcal H}} \nc{\cali}{{\mathcal I}}
\nc{\call}{{\mathcal L}} \nc{\calm}{{\mathcal M}}
\nc{\caln}{{\mathcal N}} \nc{\calo}{{\mathcal O}}
\nc{\calp}{{\mathcal P}} \nc{\calr}{{\mathcal R}}
\nc{\cals}{{\mathcal S}} \nc{\calt}{{\mathcal T}}
\nc{\calu}{{\mathcal U}} \nc{\calw}{{\mathcal W}} \nc{\calk}{{\mathcal K}}
\nc{\calx}{{\mathcal X}} \nc{\CA}{\mathcal{A}}

\nc{\fraka}{{\mathfrak a}} \nc{\frakA}{{\mathfrak A}}
\nc{\frakb}{{\mathfrak b}} \nc{\frakB}{{\mathfrak B}}
\nc{\frakD}{{\mathfrak D}} \nc{\frakF}{\mathfrak{F}}
\nc{\frakf}{{\mathfrak f}} \nc{\frakg}{{\mathfrak g}}
\nc{\frakH}{{\mathfrak H}} \nc{\frakL}{{\mathfrak L}}
\nc{\frakM}{{\mathfrak M}} \nc{\bfrakM}{\overline{\frakM}}
\nc{\frakm}{{\mathfrak m}} \nc{\frakP}{{\mathfrak P}}
\nc{\frakN}{{\mathfrak N}} \nc{\frakp}{{\mathfrak p}}
\nc{\frakS}{{\mathfrak S}} \nc{\frakT}{\mathfrak{T}}
\nc{\frakX}{{\mathfrak X}} \nc{\frakx}{\mathfrak{x}}

\nc{\BS}{\mathbb{S
}}

\font\cyr=wncyr10 \font\cyrs=wncyr7
\nc{\li}[1]{\textcolor{red}{Li:#1}}
\nc{\tian}[1]{\textcolor{blue}{Tianjie: #1}}
\nc{\xing}[1]{\textcolor{purple}{Xing: #1}}
\nc{\revise}[1]{\textcolor{blue}{#1}}


\nc{\ID}{\mathfrak{I}} \nc{\lbar}[1]{\overline{#1}}
\nc{\bre}{{\rm bre}} \nc{\sd}{\cals} \nc{\rb}{\rm RB}
\nc{\A}{\rm angularly decorated\xspace} \nc{\LL}{\rm L}
\nc{\w}{\rm wid} \nc{\arro}[1]{#1} \nc{\tforall}{\text{ for all }}
\nc{\ver}{\rm ver}
\nc{\FN}{F_{\mathrm N}}
\nc{\FNA}{\FN(X)} \nc{\NA}{N_{X}}


\begin{document}

\title[Hopf algebras on free Nijenhuis algebras]{Left counital Hopf algebras on free Nijenhuis algebras}
%
\author{Xing Gao}
\address{School of Mathematics and Statistics, Key Laboratory of Applied Mathematics and Complex Systems, Lanzhou University, Lanzhou, Gansu 730000, P.\,R. China}
         \email{gaoxing@lzu.edu.cn}

\author{Peng Lei}
\address{
School of Mathematics and Statistics, Key Laboratory of Applied Mathematics and Complex Systems, Lanzhou University, Lanzhou, Gansu 730000, P.\,R. China}
\email{leip@lzu.edu.cn}

\author{Tianjie Zhang$^{*}$}\thanks{*Corresponding author}
\address{School of Mathematics and Statistics, Ningxia University, Yinchuan, Ningxia 750021, P.\,R. China}
         \email{tjzhangmath@nxu.edu.cn}

\date{\today}
\begin{abstract}
Factorization in algebra is an important problem.
In this paper, we first obtain a unique factorization in free Nijenhuis algebras.
By using of this unique factorization, we then define a coproduct and a left counital bialgebraic structure on a free Nijenhuis algebra.
Finally, we prove that this left counital bialgebra is connected and hence obtain a left counital Hopf algebra on a free Nijenhuis algebra.
\end{abstract}

\subjclass[2010]{
16W99, 
08B20 
16T10 
16T05  	
16T30  	
}

\keywords{Nijenhuis algebra, bracketed words, factorization, left counital bialgebra, left counital Hopf algebra}

\maketitle

\tableofcontents

\setcounter{section}{0}

\allowdisplaybreaks

\section{Introduction}
A {\bf Nijenhuis algebra} $(R,N)$ is an associative algebra $R$ equipped with a linear operator $N:R\to R$, called {\bf Nijenhuis operator},
satisfying the {\bf Nijenhuis equation}:
\begin{equation}
    N(u)N(v) = N(N(u)v) + N(u N(v)) - N^2(uv)\ \text{ for all } u, v\in R.
\mlabel{eq:Nij}
\end{equation}
The Lie algebra version of the associative Nijenhuis equation started in earnest in the 1950s~\mcite{N}.
In that paper, Nijenhuis introduced the crucial concept of Nijenhuis tensor, which fits closely into
the distinguished concepts of Schouten-Nijenhuis bracket, the Fr\"olicher-Nijenhuis bracket~\mcite{FN} and the Nijenhuis-Richardson bracket.
Thereafter, Magri et at.~\mcite{KM} studied the deformation of Lie brackets given by Nijenhuis operators.
Recently, Sokolov et al.~\mcite{GS1,GS2} examined the Nijenhuis operators on Lie algebras in the context of the classical Yang-Baxter equation,
which has a close relation with the Lie algebraic version of the Rota-Baxter equation recalled below~\mcite{BD,Gub}.

The Nijenhuis operator on an associative algebra can be originated to~\mcite{CGM} in the study of quantum bi-Hamiltonian systems.
In~\mcite{Far}, K. Ebrahimi-Fard gave the construction of a free commutative associative Nijenhuis algebra on a commutative associative algebra,
based on an augmented modified quasi-shuffle product. In~\mcite{U}, the associative Nijenhuis operators were constructed due to an application of a twisting operation on Hochschild complex by
analogy with Drinfeld's twisting operations. Latterly, Guo et al. constructed the free noncommutative associative Nijenhuis algebra on an algebra, and studied the associative Nijenhuis algebras with emphasis on the relationship between the category of Nijenhuis algebras and the category of NS algebras~\mcite{LG}.


Hopf algebras, named after Heinz Hopf, occur naturally in algebraic topology,
where they have broad connections with many areas in mathematics and mathematical physics~\mcite{Ab,CK,Sw}.
Much of the research in Hopf algebras are on specific classes of examples.
A crucial class of Hopf algebras is built from free objects in various contexts, such as
free associative algebras, the enveloping algebras of Lie algebras, and the free objects in the category of dendriform algebras of Loday and of tridendriform algebras of Loday and Ronco~\cite{LR}.
It is worth to note that the classical Connes-Kreimer Hopf algebra of rooted trees
can be considered as a free operated algebra~\cite{Guop,ZGG}.

The Nijenhuis equation can be viewed as the homogeneous version of the familiar {\bf Rota-Baxter equation}:
\begin{equation*}
P(u)P(v)=P(uP(v))+P(P(u)v)+\lambda P(uv),\ \text{ for all } u, v\in R,
\end{equation*}
which gives the Rota-Baxter operator $P$ on an associative algebra $R$.
Here $\lambda$, called the weight of $P$, is a prefixed element in the base ring of the algebra $R$.
See~\mcite{AGKO, Ba, G4, Gub} for further details and references.
Hopf algebra structures were equipped on free noncommutative Rota-Baxter algebras~\mcite{ZGG},
and Hopf algebra related structures were explored on free commutative Nijenhuis algebras~\mcite{ZG}.
Following closely these two inspiring works~\mcite{ZGG, ZG}, we equip Hopf type algebra related structures on free noncommutative Nijenhuis algebras in this paper,
by making use of the construction of free Nijenhuis algebras via bracketed words given in~\mcite{GGSZ,LG}.

Here is the layout of this paper. In Section~\ref{sec:RBHOPHAL}, we review
the construction of free noncommutative Nijenhuis algebra on a set in terms of bracketed words, and then
acquire a unique factorization of such bracketed words (Proposition~\mref{pp:cdiam}).
Using this unique factorization, we define a coproduct on a free Nijenhuis algebra, which, together with a left counity, turns the
free Nijenhuis algebra into a left counital cocycle bialgebra (Theorem~\ref{thm:main}), satisfying an one-cocycle condition (Eq.~\mref{eq:cpl}).
In the final Section~\mref{sec:hopf}, we grade the left counital bialgebra obtained in the previous section, and
prove that it is connected and hence a left counital Hopf algebra (Theorem~\mref{thm:Hfree}).
\smallskip

\noindent
{\bf Convention. } In this paper, all algebras are taken to be unitary associative (but not necessary commutative) over a unitary commutative ring $\bfk$, unless the contrary is specified. Also linear maps and tensor products are taken over $\bfk$. For any set $Y$, let $M(Y)$ and $S(Y)$ denote the free monoid and free semigroup generated by $Y$, respectively.

\section{Left counital bialgebra structures on free Nijenhuis algebras}
\label{sec:RBHOPHAL}
In this section, we give a left counital cocycle bialgebra structure on a free Nijenhuis algebra. We
first recall the construction of free Nijenhuis algebras by bracketed words.

\subsection{A free Nijenhuis algebras on a set}

\begin{defn}
Let $X$ be a set. A free Nijenhuis algebra on $X$ is a Nijenhuis algebra $(\FN(X),\NA)$ together with a map $j_X: X\to \FN(A)$ such that, for any  Nijenhuis algebra $(R, N)$ and any map $f:X\to R$, there is a unique
 Nijenhuis algebra homomorphism $\free{f}: \FN(X)\to R$
such that $\free{f}\circ j_X=f$:
$$ \xymatrix{ X \ar[rr]^{j_X}\ar[drr]_{f} && \FN(X) \ar[d]^{\free{f}} \\
&& R}
$$
\mlabel{de:fna}
\end{defn}

Free Nijenhuis algebras are quotients of free operated algebras, and the construction of free operated algebras
was given in~\cite{Gub, Guop}. We reproduce that construction here to review the notations.

Let $\lc Y\rc$ denote the set $\{ \lc y\rc \, |\, y\in Y\}.$
Thus $\lc Y\rc$ is a set indexed by $Y$ but disjoint with $Y$.
For a set $X$, we first let $\frakM_0:=M(X)$ be the free monoid generated by $X$, where the identity is
denoted by $\bfone$. Then we define
$\frakM_1:=M(X\cup \lc M(X)\rc)$
with $i_{0,1}$ being the natural injection
\begin{align*}
 i_{0,1}:& \frakM_0=M(X) \hookrightarrow
    \frakM_1=M(X\cup \lc \frakM_0\rc).
\end{align*}
We identify $\frakM_0$ with its image in $\frakM_1$. In particular, the identity $\bfone$ in $\frakM_0$ is sent to
$\bfone$ in $\frakM_1$.

Inductively assume that $\frakM_{n}$ has been defined
for $n\geq 1$, and define
\begin{equation*}
 \frakM_{n+1}:=M(X\cup \lc\frakM_{n}\rc ).
 \end{equation*}
Further assume that the embedding
$$i_{n-1,n}: \frakM_{n-1} \to \frakM_{n}$$
has been obtained. Then we have the injection
$$  \lc\frakM_{n-1}\rc \hookrightarrow
    \lc \frakM_{n} \rc.$$
Thus by the freeness of
$\frakM_{n}=M(X\cup \lc\frakM_{n-1}\rc)$, we have
\begin{eqnarray*}
\frakM_{n} &=& M(X\cup \lc\frakM_{n-1}\rc)\hookrightarrow
    M(X\cup \lc \frakM_{n}\rc) =\frakM_{n+1}.
\end{eqnarray*}
We finally define the monoid
$$ \frakM(X):=\dirlim \frakM_n = \bigcup_{n\geq 0}\frakM_n $$
with identity $\bfone$. Elements of $\frakM(X)$ are called {\bf bracketed words in $X$}.
Let $\bfk\frakM(X)$ be the free $\bfk$-module spanned by
$\frakM(X)$. Since the basis is a monoid, the
multiplication on $\frakM(X)$ can be extended via linearity to turn the
$\bfk$-module $\bfk\frakM(X)$ into an algebra, which
we still denote by $\bfk\frakM(X)$. Similarly, we can extend the operator
$\lc\ \rc: \frakM(X) \to \frakM(X)$, which takes $w \in \frakM(X)$ to
$\lc w\rc$, to an operator $\lc\ \rc$ on $\bfk\frakM(X)$ by linearity
and turn the algebra $\bfk\frakM(X)$ into an operated
algebra.

\begin{lemma}{\bf \cite{Gub, Guop}}
Let $i_X:X \to \frakM(X)$ and $j_X: \frakM(X) \to \bfk\frakM(X)$ be the natural embeddings. Then, with structures as above,
\begin{enumerate}
\item
the triple $(\frakM(X),\lc\ \rc, i_X)$ is the free operated monoid on $X$; and
\label{it:mapsetm}
\item
the triple $(\bfk\frakM(X),\lc\ \rc, j_X\circ i_X)$ is the free operated algebra
on $X$. \label{it:mapalgsg}
\end{enumerate}
\label{pp:freetm}
\end{lemma}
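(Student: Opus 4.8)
The plan is to establish both universal properties by the standard inductive and direct-limit argument, deducing part~(2) from part~(1) by linearization. For part~(1), let $(U,P)$ be any operated monoid and $f\colon X\to U$ any map; I must produce a unique operated monoid homomorphism $\free{f}\colon \frakM(X)\to U$ with $\free{f}\circ i_X=f$. I would build $\free{f}$ as a direct limit of maps $\free{f}_n\colon \frakM_n\to U$ defined by induction on $n$. For the base case, $\frakM_0=M(X)$ is the free monoid on $X$, so $f$ extends uniquely to a monoid homomorphism $\free{f}_0\colon M(X)\to U$. For the inductive step, since $\frakM_{n+1}=M(X\cup\lc\frakM_n\rc)$ is the free monoid on the set $X\cup\lc\frakM_n\rc$, it suffices to prescribe a set map on these generators: send $x\in X$ to $f(x)$ and send $\lc w\rc$ to $P(\free{f}_n(w))$ for $w\in\frakM_n$, and then the freeness of $\frakM_{n+1}$ yields a unique monoid homomorphism $\free{f}_{n+1}$. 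Checking on generators shows that $\free{f}_{n+1}$ restricts to $\free{f}_n$ along $i_{n,n+1}$, so the $\free{f}_n$ are compatible and assemble into $\free{f}=\dirlim \free{f}_n$ on $\frakM(X)=\bigcup_{n\geq 0}\frakM_n$.

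Next I would verify that $\free{f}$ is an operated monoid homomorphism and that it is unique. It is a monoid homomorphism because each $\free{f}_n$ is one and multiplication is compatible across levels. To see that it commutes with the operators, note that any $w\in\frakM(X)$ lies in some $\frakM_n$, whence $\lc w\rc\in\frakM_{n+1}$ and, by construction, $\free{f}_{n+1}(\lc w\rc)=P(\free{f}_n(w))$, that is, $\free{f}(\lc w\rc)=P(\free{f}(w))$. For uniqueness, I would argue by induction on $n$ that any operated monoid homomorphism $g$ with $g\circ i_X=f$ satisfies $g|_{\frakM_n}=\free{f}_n$: on $\frakM_0$ this is the uniqueness in the free monoid property, and on $\frakM_{n+1}$ the values of $g$ on the generators $X\cup\lc\frakM_n\rc$ are forced, on $X$ by $g\circ i_X=f$ and on $\lc\frakM_n\rc$ by $g(\lc w\rc)=P(g(w))=P(\free{f}_n(w))$ using the inductive hypothesis, so $g$ and $\free{f}$ agree on all generators and hence everywhere.

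For part~(2), I would deduce the statement for $\bfk\frakM(X)$ from part~(1) by linearization. Given any operated algebra $(A,Q)$ and a map $f\colon X\to A$, regard $(A,\cdot,Q)$ as an operated monoid under its multiplication; part~(1) then furnishes a unique operated monoid homomorphism $\frakM(X)\to A$ extending $f$, which I extend by $\bfk$-linearity to a map $\bfk\frakM(X)\to A$. Since $\frakM(X)$ is a $\bfk$-basis of $\bfk\frakM(X)$ and the structure maps of $\bfk\frakM(X)$ are the linear extensions of those on the monoid, this linear extension is an algebra homomorphism commuting with the operators; and it is the unique such map, because an algebra homomorphism is determined by its restriction to the basis $\frakM(X)$, where part~(1) forces its values.

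The routine bookkeeping lies in the compatibility checks across the embeddings $i_{n,n+1}$. The only genuinely delicate point is ensuring that the operator $\lc\ \rc$ is well defined as a level-raising map $\frakM_n\to\frakM_{n+1}$ compatible with the embeddings, so that the inductive definition of $\free{f}$ is consistent with it; but since $\lc w\rc\in\lc\frakM_n\rc\subseteq\frakM_{n+1}$ whenever $w\in\frakM_n$, this is already built into the construction of $\frakM(X)$ recalled above, and no real obstacle remains.
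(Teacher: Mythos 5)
The paper states this lemma as a citation to \cite{Gub, Guop} and gives no proof of its own, so there is nothing internal to compare against; your argument is the standard one found in those references. Your construction --- building $\free{f}$ level by level on $\frakM_n=M(X\cup\lc\frakM_{n-1}\rc)$ via the freeness of each ordinary free monoid, passing to the direct limit, checking the operator compatibility $\free{f}(\lc w\rc)=P(\free{f}(w))$, proving uniqueness by induction on depth, and then deducing part (2) by linearizing over the basis $\frakM(X)$ --- is correct and complete, with only the minor remark that the compatibility check $\free{f}_{n+1}\circ i_{n,n+1}=\free{f}_n$ is itself an induction riding on the previous level's compatibility, which you gloss but which causes no difficulty.
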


It was shown in~\cite{Gub} that every $w\in\frakM(X)\setminus\{\bfone\}$ has a unique {\bf standard decomposition:}
$$w = w_{1}\cdots w_{m},$$
where $w_{i},1\leq i\leq m$, is alternatively in the free semigroup $S(X)$ or in $\lc\frakM(X)\rc:=\{\lc u\rc\,|\,u\in \frakM(X) \}$.
We call $m$ the {\bf breadth} of $w$, denoted by $\bre(w)=m$. If $w=\bfone \in \frakM(X)$, we define $\bre(w) = 0$.
Elements $w\in \frakM_n\setminus \frakM_{n-1}$ are said to have {\bf depth} $n$, denoted by $\dep(w)=n$.

We now review a \bfk-basis of a free Nijenhuis algebra given in~\mcite{GGSZ,LG}, which is a subset of bracketed words.
Let $Y$ and $Z$ be subsets of $\frakM(X)$. Define the {\bf alternating products} of $Y$ and $Z$ by
\index{alternating product}
\allowdisplaybreaks{
\begin{eqnarray*}
\Lambda(Y,Z)&=&\Big( \bigcup_{r\geq 1} \big (Y\lc Z\rc \big)^r \Big) \bigcup
    \Big(\bigcup_{r\geq 0} \big (Y\lc Z\rc \big)^r  Y\Big) \notag \\
&& \bigcup \Big( \bigcup_{r\geq 1} \big( \lc Z\rc Y \big )^r \Big)
 \bigcup \Big( \bigcup_{r\geq 0} \big (\lc Z\rc Y\big )^r \lc Z\rc \Big) \bigcup \Big\{\bfone\Big\}.
\end{eqnarray*}
}
Recursively define $$\frak X_{0}:=M(X)\,\text{ and }\,\frak X_{n}:=\Lambda(\frak X_{0},\frak X_{n-1}),n\geq1.$$
Thus $\frak X_{0}\subseteq\cdots\subseteq\frak X_{n}\subseteq\cdots$ and so we have

$$\frak X_{\infty}:=\dirlim\frak X_{n} =\bigcup_{n\geq 0} \frak X_{n}.$$

Let $\FNA:=\bfk\frak X_{\infty}$ the free \bfk-module spanned by $\frak X_{\infty}$. To make $\FNA$ into a Nijenhuis algebra, a Nijenhuis operator $\NA$ and a product $\diamond$
need to be equipped.
Let $w,w'$ be two basis elements in $\frak X_{\infty}$. Define a linear operator $\NA: \FNA\to \FNA$ by setting
$$\NA(w)=\lc w\rc.$$
Next we define $w~\diamond~w'$ inductively on the sum $n:=\dep(w)+\dep(w')\geq 0$.
If $n=0$, then $w,w'\in\frak X_{0}=M(X)$ and define $w~\diamond~w':=xx'$, the concatenation in $M(X)$.
Suppose that $w~\diamond~w'$ have been defined for all $w,w'\in\frak X_{\infty}$
with $n\leq k$ for a $k\geq0$ and consider $w,w'\in\frak X_{\infty}$ with $n=k+1$.
If $w = \bfone$ or $w'=\bfone$, without loss of generality, letting
$w = \bfone$, then define $w~\diamond~w' := w'$. Assume $w \neq \bfone$ and $w' \neq \bfone$, and so
$\bre(w), \bre(w')\geq 1$. Consider first $\bre(w)=\bre(w')=1$. Then $w$ and $w'$ are in $S(X)\subset\frak X_{0}$ or
$\lc\frak X_{\infty}\rc$ and can not be both in $S(X)$ since $n=k+1\geq1$. We accordingly define
\begin{equation}
\begin{aligned}
w~\diamond~w'=\left\{\begin{array}{ll}
ww',&\text{ if }w\in S(X),\,w'\in\lc\frak X_{\infty}\rc,\\
ww',&\text{ if }w\in\lc\frak X_{\infty}\rc,\,w'\in S(X),\\
\lc w~\diamond~\lbar{w}'\rc+\lc \lbar{w}~\diamond~ w'\rc-\lc\lc\lbar{w}~\diamond~\lbar{w}'
\rc\rc,&\text{ if }w=\lc\lbar{w}\rc,\,w'=\lc\lbar{w}'\rc\in\lc\frak X_{\infty}\rc.
\end{array}\right.
\end{aligned}
\mlabel{eq:Bdia}
\end{equation}
Here the product in the first two cases is by concatenation and in the third case is by the induction hypothesis on $n$.
Now consider $\bre(w)\geq1$ or $\bre(w')\geq1$. Let $w=w_{1}\cdots w_{m}$ and $w'=w'_{1}\cdots w'_{m'}$ be the standard decompositions of $w$ and $w'$. Define
\begin{equation}
w~\diamond~ w':=w_{1}\cdots w_{m-1} (w_{m}~\diamond~ w'_{1})w'_{2}\cdots w'_{m'},
\mlabel{eq:cdiam}
\end{equation}
where $w_{m}~\diamond~ w'_{1}$ is defined by Eq.~(\mref{eq:Bdia}) and the rest products are
given by the concatenation.


\begin{lemma}\mcite{GGSZ,LG}
Let $X$ be a set and $j_X:X\to \frak X_\infty \to \FN(X)$ the natural injection. Then the quadruple $(\FN(X),\shpr,N_X,j_X)$ is the free Nijenhuis algebra on $X$.
\mlabel{lem:ncfree}
\end{lemma}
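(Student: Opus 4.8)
The plan is to verify that the quadruple $(\FN(X),\shpr,\NA,j_X)$ satisfies the universal property of Definition~\ref{de:fna}, which splits into two tasks: first, showing that $(\FN(X),\shpr,\NA)$ is genuinely a Nijenhuis algebra; and second, constructing, for each Nijenhuis algebra $(R,N)$ and each map $f\colon X\to R$, a unique Nijenhuis algebra homomorphism $\free{f}\colon\FN(X)\to R$ with $\free{f}\circ j_X=f$. Throughout I would argue on the $\bfk$-basis $\frak X_\infty$ and then extend by linearity, running every induction on the pair $\big(\dep(w)+\dep(w'),\,\bre(w)+\bre(w')\big)$ ordered lexicographically, so that the recursive clauses \eqref{eq:Bdia} and \eqref{eq:cdiam} always reduce to strictly smaller data: the third clause of \eqref{eq:Bdia} lowers the total depth by one, while \eqref{eq:cdiam} preserves depth but lowers the total breadth.

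For the first task, the unit axiom $\bfone\shpr w=w\shpr\bfone=w$ is immediate, and $\NA=\lc\ \rc$ is well defined on $\frak X_\infty$ by construction. The Nijenhuis equation is in fact built into the product: for basis elements $u,v$ the elements $\NA(u)=\lc u\rc$ and $\NA(v)=\lc v\rc$ both have breadth one, so the third clause of \eqref{eq:Bdia} gives directly $\lc u\rc\shpr\lc v\rc=\lc\lc u\rc\shpr v\rc+\lc u\shpr\lc v\rc\rc-\lc\lc u\shpr v\rc\rc$, which is exactly $\NA(\NA(u)\shpr v)+\NA(u\shpr\NA(v))-\NA^2(u\shpr v)$; bilinearity then yields \eqref{eq:Nij} for all $u,v\in\FN(X)$. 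The substantial point is associativity, $(w\shpr w')\shpr w''=w\shpr(w'\shpr w'')$. Using the standard decomposition together with \eqref{eq:cdiam}, both sides are controlled by the single ``junction'' product of the last factor of one word with the first factor of the next, so the verification reduces to the case $\bre(w)=\bre(w')=\bre(w'')=1$. When at least one factor lies in $S(X)$ the relevant clauses of \eqref{eq:Bdia} act by concatenation and associativity is clear; the remaining case $w=\lc\lbar{w}\rc,\ w'=\lc\lbar{w}'\rc,\ w''=\lc\lbar{w}''\rc$ is handled by expanding both sides with the third clause of \eqref{eq:Bdia} and matching the resulting nested-bracket terms through the induction hypothesis. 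This last case analysis is where I expect the real work to lie, since the expansions are sums and their breadths and depths must be tracked carefully to close the induction.

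For the universal property I would define $\free{f}$ on $\frak X_\infty$ recursively: set $\free{f}(\bfone)=1_R$ and $\free{f}(x)=f(x)$ for $x\in X$, extended multiplicatively over $\frak X_0=M(X)$; and for a word of positive depth with standard decomposition $w=w_{1}\cdots w_{m}$ put $\free{f}(w)=\free{f}(w_{1})\cdots\free{f}(w_{m})$ (product in $R$), where $\free{f}(\lc u\rc):=N(\free{f}(u))$. Extending by linearity, one has $\free{f}\circ j_X=f$ and $\free{f}\circ\NA=N\circ\free{f}$ by construction, so it remains to check multiplicativity $\free{f}(w\shpr w')=\free{f}(w)\free{f}(w')$, again by induction via \eqref{eq:cdiam}. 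The only nontrivial clause is the pure-bracket case of \eqref{eq:Bdia}, where the three-term expansion is sent by $\free{f}$ to $N\big(N(\free{f}(\lbar{w}))\,\free{f}(\lbar{w}')\big)+N\big(\free{f}(\lbar{w})\,N(\free{f}(\lbar{w}'))\big)-N^2\big(\free{f}(\lbar{w})\,\free{f}(\lbar{w}')\big)$, which equals $N(\free{f}(\lbar{w}))\,N(\free{f}(\lbar{w}'))=\free{f}(w)\free{f}(w')$ precisely because $N$ satisfies the Nijenhuis equation \eqref{eq:Nij} in $R$. Finally, uniqueness follows because $\frak X_\infty$ is generated from $X$ under $\shpr$ and $\NA$, so any Nijenhuis algebra homomorphism agreeing with $f$ on $X$ is forced to coincide with $\free{f}$ on all of $\frak X_\infty$, and hence on $\FN(X)$.
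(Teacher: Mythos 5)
Your proposal is correct and follows the standard route: verify the Nijenhuis axiom directly from the third clause of Eq.~(\ref{eq:Bdia}), establish associativity of $\diamond$ by the lexicographic induction on total depth and breadth reduced to the junction product of breadth-one factors, and obtain the universal property by the recursive definition of $\free{f}$ on standard decompositions, with the Nijenhuis equation in $R$ absorbing the three-term expansion. The paper itself gives no proof of this lemma but cites \cite{GGSZ,LG}, where essentially this argument (including the triple-bracket associativity computation you correctly single out as the crux) is carried out in full.
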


{\em For the rest of this paper}, unless alternative notations are specifically given, we will use the
infix notation $\lc w\rc$ interchangeably with $\NA(w)$ for any $w\in \FNA$.
Write
$$\ID:=X \sqcup \lc \frak X_\infty\rc.
$$
\begin{defn}
We call a sequence $w_1,\cdots,w_m$ from the set $\ID$ {\bf alternating} if
no consecutive elements in the sequence are in $\lc \frak X_\infty\rc$, that is,
either $w_i$ or $w_{i+1}$ is in $X$ for each $1\leq i\leq m-1$.
\mlabel{de:alt}
\end{defn}

The following unique factorization sheds insight on the construction of a coproduct on $\FNA$.
\begin{prop}
Let $w\in \frak X_\infty \setminus\{\bfone\}$. Then there
is a unique alternating sequence $w_1,\cdots,w_m$ in $\ID$ such that
\begin{equation*}
w =w_{1}~\diamond~\cdots~\diamond~w_{m},
\end{equation*}
called the {\bf \diam factorization} of $w$. We also call $\w(w):=m$ the {\bf width} of $w$.
If $w=\bfone$, we define $\w(w) := 0$.
\mlabel{pp:cdiam}
\end{prop}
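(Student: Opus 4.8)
The plan is to show that, under the \diam hypothesis of Definition~\mref{de:alt}, the product $\diamond$ degenerates to plain concatenation of bracketed words, after which both existence and uniqueness reduce to statements about the standard decomposition in $\frakM(X)$. The structural fact I will lean on is that an element of $\frak X_\infty$ never has two adjacent bracketed factors in its standard decomposition, and that the contents of each bracketed factor again lie in $\frak X_\infty$; this is precisely the \diam feature built into the sets $\frak X_n=\Lambda(\frak X_0,\frak X_{n-1})$.

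First I would prove the key reduction: for any \diam sequence $w_1,\dots,w_m$ in $\ID$, the product $w_1\diamond\cdots\diamond w_m$ equals the concatenation $w_1\cdots w_m$ in $\frakM(X)$. I argue by induction on $m$, the case $m=1$ being trivial. For $m\geq 2$, set $W:=w_1\diamond\cdots\diamond w_{m-1}$, which by induction is the concatenation $w_1\cdots w_{m-1}$, and let $v$ be the last factor of its standard decomposition. Evaluating $W\diamond w_m$ through Eq.~(\mref{eq:cdiam}) isolates the single product $v\diamond w_m$. The \diam condition now forces favorable cases: if $w_m\in\lc\frak X_\infty\rc$ then $w_{m-1}\in X$, so $v\in S(X)$; if instead $w_{m-1}\in\lc\frak X_\infty\rc$ then $v=w_{m-1}$ is an isolated bracketed factor and $w_m\in X$; and if both $w_{m-1},w_m\in X$ then $v\in S(X)$ and $\dep(v)=\dep(w_m)=0$. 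In every case the pair $(v,w_m)$ falls either into the base case or into one of the first two lines of Eq.~(\mref{eq:Bdia}), all of which are concatenation. The third, Nijenhuis-expanding line of Eq.~(\mref{eq:Bdia}) requires two adjacent bracketed factors, which is exactly what the \diam hypothesis excludes. Hence $W\diamond w_m=w_1\cdots w_m$, completing the induction.

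With this reduction in hand, existence follows from the standard decomposition $w=v_1\cdots v_k$ of $w\in\frak X_\infty\setminus\{\bfone\}$: each letter block $v_i\in S(X)$ splits into its constituent letters of $X$, each $v_i\in\lc\frak X_\infty\rc$ is already an element of $\ID$, and since $w\in\frak X_\infty$ no two of the bracketed $v_i$ are adjacent. Reading these factors left to right yields a sequence in $\ID$ that is \diam in the sense of Definition~\mref{de:alt}, and by the reduction its $\diamond$-product is the concatenation, namely $w$. For uniqueness, suppose $w=w_1\diamond\cdots\diamond w_m$ is any \diam factorization; by the reduction $w=w_1\cdots w_m$ as bracketed words. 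Grouping the maximal runs of $X$-letters into elements of $S(X)$ and leaving each bracketed factor on its own recovers a standard decomposition of $w$, the \diam condition guaranteeing that no two bracketed factors become adjacent or merge. By uniqueness of the standard decomposition \cite{Gub}, the bracketed factors and the letter blocks are determined; the unique factorization of words in the free monoid $M(X)$ then pins down the individual letters. Thus the passage from standard decomposition to \diam factorization by splitting letter blocks into single letters is a bijection, and $w_1,\dots,w_m$ is unique. The convention $\w(\bfone)=0$ corresponds to the empty factorization.

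I expect the reduction step to be the main obstacle, since it requires a careful induction unwinding the recursive definition of $\diamond$ across Eqs.~(\mref{eq:Bdia}) and~(\mref{eq:cdiam}), together with the precise use of the \diam condition to rule out the single expanding case; once concatenation replaces $\diamond$, the remaining existence and uniqueness arguments are routine bookkeeping on the standard decomposition.
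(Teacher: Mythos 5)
Your proposal is correct and follows essentially the same route as the paper: both arguments rest on the observation that for an alternating sequence the product $\diamond$ reduces to concatenation (so that existence comes from splitting the $S(X)$ blocks of the standard decomposition into single letters, and uniqueness from the uniqueness of the standard decomposition). The only difference is one of detail, not of method: you prove the concatenation reduction by an explicit induction on $m$ with a case analysis of Eqs.~(\mref{eq:Bdia}) and~(\mref{eq:cdiam}), whereas the paper simply asserts that it ``follows from Eq.~(\mref{eq:Bdia})''.
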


\begin{proof}
(Existence). Suppose $w = u_1\cdots u_n$ is the standard decomposition of $w$.
Then $u_1,\cdots, u_n$ are alternatively in $S(X)$ and $\lc \frakX_\infty\rc$. Expanding the factors that are in $S(X)$,
we get
$$w=w_1\cdots w_m, \,\text{ where $w_1,\cdots,w_m$ is an alternating sequence}.$$
It follows from Eq.~(\mref{eq:Bdia}) that $w = w_1~\diamond~\cdots ~\diamond~ w_m$.

(Uniqueness). Suppose $w_{1},\cdots,w_{m}$ and
$w'_{1},\cdots,w'_{m'}$ are two alternating sequences such that
$$w =w_{1}~\diamond~\cdots~\diamond~w_{m}
=w'_{1}~\diamond~\cdots~\diamond~w'_{m'}.$$
By Definition \mref{de:alt} and Eq.~(\mref{eq:Bdia}), we get
$$w_{1}\cdots w_{m} = w_{1} ~\diamond~\cdots~\diamond~ w_{m}
=w =  w'_{1}~\diamond~\cdots~\diamond~ w'_{m'}
= w'_{1} \cdots w'_{m'},$$
and so
$m=m'$ and $w_{i} = w'_{i}$ for $1\leq i\leq m$ by the construction of $\frak X_\infty$.
\end{proof}

\begin{remark}
\begin{enumerate}
\item
The breadth and width of $w\in \frakX_\infty$ are different. For example, let $w=x_1x_2x_3$ with $x_1, x_2,x_3\in X$.
Then $\bre(w)=1$ and $\w(w)=3$.

\item If the sequence $w_1,\cdots,w_m$ is alternating, then, for each $1\leq i\leq m-1$,  $w_i$ and $w_{i+1}$ cannot be both in $\lc \frakX_\infty\rc$. However, $w_i$ and $w_{i+1}$ may be both in $X$.
\end{enumerate}
\mlabel{remark:1}
\end{remark}

\subsection{The left counital bialgebra structure on free Nijenhuis algebras}
\mlabel{sec:bial}
In this subsection, we use Proposition~\mref{pp:cdiam} to get a coproduct on $\FNA$,
which, together with a left counit, makes $\FNA$ into a left counital coalgebra.
The 1-cocycle condition $\Delta P=P\ot \bfone +(\id \ot P)\Delta$, which is used to construct the Hopf
algebra structure on free Rota-Baxter algebras~\cite{ZGG}, does not work for free Nijenhuis algebras.
So Guo et al. proposed the following concepts~\mcite{ZG}.

\begin{defn}
\begin{enumerate}
\item
A {\bf left counital coalgebra} is a triple $(H,\Delta,\vep)$, where the coproduct $\Delta:H\to H\ot H$ satisfies the coassociativity: $(\Delta\ot\id)\Delta=(\id\ot\Delta)\Delta$ and the left counit $\vep:H\to \bfk$ satisfies the {\bf left  counicity}: $(\vep\ot\id)\Delta =\beta_\ell$, where $\beta_\ell:H\to \bfk\ot H, u\mapsto 1\ot u\ \tforall u\in H$.

\item
A {\bf left counital bialgebra} is a quintuple $(H,m, u, \Delta,\vep)$, where $(H, m,u)$ is an algebra and $(H, \Delta, \vep)$
is a left counital coalgebra such that $\Delta$ and $\vep$ are homomorphisms of algebras.

\item
A {\bf left counital operated bialgebra} is a sextuple $(H,m,u,\Delta,\vep,P)$, where $(H,m, u, \Delta,\vep)$ is a left counital bialgebra and $(H,m,u,P)$ is an operated algebra, that is, an algebra $(H, m, u)$ with a linear operator $P:H\to H$.
\item
A {\bf left  counital cocycle bialgebra} is a left counital operated bialgebra $(H,m,u,\Delta,\vep,P)$ satisfying the one-cocycle property:
\begin{equation}
\Delta P=(\id\ot P)\Delta.
\mlabel{eq:cpl}
\end{equation}
\end{enumerate}
\end{defn}

Thanks to Eq.~(\mref{eq:cpl}), we can construct a coproduct $\Delta:\FNA\to \FNA\ot \FNA$
by defining $\Delta(w)$ for $w\in\frak X_{\infty}$ through an induction on the depth $\dep(w)$.
If $\dep(w)=0$, then $w\in\frak X_{0}=M(X)$. First we define
\begin{equation}
\Delta(w):=\bfone\ot \bfone\,\text{ provided }\, w= \bfone
\mlabel{eq:Init}
\end{equation}
and
\begin{equation}
\Delta(w):= \bfone\ot x  \,\text{ provided }\, w=x\in X.
\mlabel{eq:Dbull}
\end{equation}
If $ w=x_{1}\cdots x_{m}\in S(X)$ with $m\geq2$ and $x_{i}\in X$ for $1\leq i\leq m$,
then $w = x_1 ~\diamond~ \cdots ~\diamond~ x_m $ by Eq.~(\mref{eq:Bdia}), and we define
\begin{equation}
\Delta(w):=\Delta(x_1)~\diamond~ \cdots ~\diamond~ \Delta(x_{m}).
\mlabel{eq:xexte}
\end{equation}

Assume $\Delta(w)$ has been defined for $w\in\frak X_{\infty}$ with $\dep(w)\leq n$ and consider $w\in \frak X_{\infty}$ with $\dep(w)=n+1$.
In view of Proposition~\mref{pp:cdiam}, there is a unique alternating sequence $w_1,\cdots, w_m$ such that
$$w = w_{1} ~\diamond~ \cdots  ~\diamond~ w_{m}, \text{ where } w_1, \cdots, w_m \in \ID, m\geq 1.$$
If $m=1$, then $w\in \lc\frak X_{\infty}\rc$ by $\dep(w)=n+1\geq1$. So we can write $w:=\lc\lbar{w}\rc$ for some $\lbar{w}\in \frak X_{\infty}$.
Then we define by Eq.~(\mref{eq:cpl}) that
\begin{equation}
\Delta(w):=\Delta(\lc\lbar{w}\rc):=(\id\ot \NA)\Delta(\lbar{w}),
\mlabel{eq:Tree}
\end{equation}
where $\Delta(\lbar{w})$ is defined by the induction hypothesis.
If $m\geq 2$, we define
\begin{equation}
\Delta(w):=\Delta(w_{1})~\diamond~\cdots~\diamond~ \Delta(w_{m}),
\mlabel{eq:Forest}
\end{equation}
where $\Delta(w_{1}), \cdots, \Delta(w_{m})$ are defined in Eq.~(\mref{eq:Dbull}) or Eq.~(\mref{eq:Tree}).
Note $\Delta(w)$ is well-defined by the uniqueness of the \cdiam factorization of $w$ in Proposition~\mref{pp:cdiam}.
This completes the inductive definition of the coproduct $\Delta$ on $\FNA$.
We shall tacitly denote by $\Delta$ the coproduct defined here throughout the remainder of the paper.

The following is an easy property of $\Delta$, which will be used frequently.

\begin{lemma}
Let $X$ be a set and $w\in M(X)$. Then $\Delta(w) =\bfone\ot w.$
\mlabel{lem:delw}
\end{lemma}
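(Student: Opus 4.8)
The plan is to argue by induction on the number $m$ of letters in the word $w=x_1\cdots x_m$ with each $x_i\in X$; equivalently, on the width $\w(w)$, since for a plain word in $M(X)$ the \diam factorization is simply the sequence of its letters. The base cases are read off directly from the defining equations: when $m=0$ we have $w=\bfone$ and $\Delta(\bfone)=\bfone\ot\bfone$ by Eq.~(\mref{eq:Init}), while when $m=1$ we have $w=x\in X$ and $\Delta(x)=\bfone\ot x$ by Eq.~(\mref{eq:Dbull}). Both agree with the asserted formula $\Delta(w)=\bfone\ot w$.

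For the inductive step with $m\geq 2$, I would start from the definition in Eq.~(\mref{eq:xexte}), which gives $\Delta(w)=\Delta(x_1)\diamond\cdots\diamond\Delta(x_m)$, where the product is taken in the tensor-product algebra $\FNA\ot\FNA$. Substituting the base-case values $\Delta(x_i)=\bfone\ot x_i$ and using that multiplication in $\FNA\ot\FNA$ is performed componentwise, one obtains
\[
\Delta(w)=(\bfone\ot x_1)\diamond\cdots\diamond(\bfone\ot x_m)=(\bfone\diamond\cdots\diamond\bfone)\ot(x_1\diamond\cdots\diamond x_m).
\]
Since $\bfone$ is the unit of $\diamond$, the left slot collapses to $\bfone\diamond\cdots\diamond\bfone=\bfone$, and since every $x_i$ has depth $0$, the product $x_1\diamond\cdots\diamond x_m$ is just the concatenation $x_1\cdots x_m=w$ by the $n=0$ clause in the definition of $\diamond$. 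Hence $\Delta(w)=\bfone\ot w$, completing the induction.

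The computation is essentially routine; the only point deserving care is the passage displayed above, namely the three facts that $\diamond$ on $\FNA\ot\FNA$ factors through the two tensor slots, that $\bfone$ acts as a two-sided unit for $\diamond$ so that the left factor reduces to $\bfone$, and that on depth-$0$ elements $\diamond$ coincides with concatenation in $M(X)$. Once these are in place there is no further obstacle, and indeed this lemma can then be cited as the anchor of the depth-$0$ base step for later inductive arguments about $\Delta$.
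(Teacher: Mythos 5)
Your proposal is correct and follows essentially the same route as the paper: split off the case $w=\bfone$ via Eq.~(\mref{eq:Init}), then for $w=x_1\cdots x_m$ apply Eqs.~(\mref{eq:Dbull}) and~(\mref{eq:xexte}) and use that $\diamond$ acts componentwise on $\FNA\ot\FNA$ with $\bfone$ as unit to collapse the left tensor slot. The only cosmetic difference is that you phrase it as an induction on $m$, whereas the induction hypothesis is never actually invoked (Eq.~(\mref{eq:xexte}) hands you the full product of the $\Delta(x_i)$ in one step), so the paper simply performs the computation directly.
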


\begin{proof}
Consider first $w=\bfone$. Then $\Delta(w) = \bfone\ot \bfone = \bfone\ot w$ by Eq.~(\mref{eq:Init}).
Consider next $w\neq \bfone$. Then we may write $w=x_{1}\cdots x_{m}$, where $m\geq1$ and $x_{1},\cdots, x_m \in X$.
Then it follows from Eqs.~(\mref{eq:Dbull}) and~(\mref{eq:xexte}) that
\begin{align*}
\Delta(w) =& \Delta(x_1\cdots x_m) = \Delta(x_1) ~\diamond~ \cdots ~\diamond~ \Delta(x_m) \\
=& (\bfone\ot x_1)  ~\diamond~ \cdots ~\diamond~ (\bfone\ot x_m) =\bfone \ot (x_1 ~\diamond~ \cdots ~\diamond~ x_m) \\
=&  \bfone \ot (x_1 \cdots  x_m) = \bfone\ot w,
\end{align*}
as required.
\end{proof}

We are going to show the compatibility of $\Delta$ with the product $\diamond$.
Let us begin with a simple case.

\begin{lemma}
Let $X$ be a set and $u,v\in M(X)$.
Then
\begin{equation}
\Delta(u~\diamond~ v)=\Delta(u)~\diamond~\Delta(v).
\notag
\end{equation}
\mlabel{lem:I1}
\end{lemma}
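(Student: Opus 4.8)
The plan is to reduce everything to the base case of the product $\diamond$ together with Lemma~\mref{lem:delw}. Since $u,v\in M(X)=\frak X_0$, both have depth $0$, so their $\diamond$-product is computed by the $n=0$ clause in the definition: $u~\diamond~v$ is simply the concatenation $uv$, which again lies in $M(X)$. This is the first observation I would record, because it keeps the left-hand side entirely inside $M(X)$ where Lemma~\mref{lem:delw} applies.

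With that in hand, I would compute the left-hand side directly. Applying Lemma~\mref{lem:delw} to the word $uv\in M(X)$ gives
\begin{equation*}
\Delta(u~\diamond~v)=\Delta(uv)=\bfone\ot uv.
\end{equation*}
For the right-hand side, Lemma~\mref{lem:delw} gives $\Delta(u)=\bfone\ot u$ and $\Delta(v)=\bfone\ot v$, so I must evaluate $(\bfone\ot u)~\diamond~(\bfone\ot v)$ in the tensor-product algebra $\FNA\ot\FNA$. Here the relevant fact is that $\diamond$ on the tensor square is the componentwise product, so $(\bfone\ot u)~\diamond~(\bfone\ot v)=(\bfone~\diamond~\bfone)\ot(u~\diamond~v)=\bfone\ot uv$, using once more that $u~\diamond~v=uv$ in $M(X)$ and that $\bfone$ is the identity. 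Comparing the two expressions yields the claim.

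I do not expect a serious obstacle here; the statement is essentially a bookkeeping check. The only point that warrants care is making the componentwise rule for $\diamond$ on $\FNA\ot\FNA$ explicit, since the whole identity hinges on interpreting $\Delta(u)~\diamond~\Delta(v)$ via the tensor-algebra multiplication rather than as some other operation. Once that convention is stated, both sides collapse to $\bfone\ot uv$ and the proof is complete. This lemma serves as the depth-zero seed for the more general compatibility of $\Delta$ with $\diamond$, which will later require an induction on $\dep(w)+\dep(w')$ built on top of it.
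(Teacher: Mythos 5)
Your proof is correct and takes essentially the same route as the paper: both arguments reduce to Lemma~\mref{lem:delw} together with the facts that $\diamond$ on $M(X)$ is concatenation and that $\diamond$ on $\FNA\ot\FNA$ acts componentwise. The only difference is organizational --- you treat the identity uniformly through concatenation, while the paper splits off the case $u=\bfone$ or $v=\bfone$ separately; this is a harmless streamlining.
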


\begin{proof}
We have two cases to consider.

\noindent{\bf Case 1.} $u = \bfone$ or $v = \bfone$. By symmetry, let $u = \bfone$.
Then $\Delta(u) =\bfone\ot \bfone$ by Eq.~(\mref{eq:Init}) and so
$$\Delta(u~\diamond~v)= \Delta(v) = (\bfone\ot \bfone) ~\diamond~\Delta(v) =\Delta(u)~\diamond~\Delta(v).$$

\noindent{\bf Case 2.} $u \neq \bfone$ and $v \neq \bfone$.
Then we can write
$$u= x_{1} \cdots x_{p}\,\text{ and }\, v=y_{1} \cdots y_{q},$$
where $p,q\geq 1$ and $x_1,\cdots, x_p, y_1, \cdots ,y_q\in X$.
So
\begin{align*}
\Delta(u~\diamond~ v) =& \Delta(x_1\cdots x_p y_1\cdots y_q) \quad \text{(by Eq.~(\mref{eq:Bdia}))}\\
=& \bfone \ot (x_1\cdots x_p y_1\cdots y_q) \quad \text{(by Lemma~\mref{lem:delw})}\\
=& \bfone \ot ((x_1\cdots x_p)~\diamond~  (y_1\cdots y_q)) \quad \text{(by Eq.~(\mref{eq:cdiam}))}\\
=&  (\bfone \ot (x_1\cdots x_p) )~\diamond~(\bfone \ot (y_1\cdots y_q))\\
=& \Delta (x_1\cdots x_p)~\diamond~\Delta (y_1\cdots y_q)  \quad \text{(by Lemma~\mref{lem:delw})} \\
=& \Delta (u)~\diamond~\Delta (v),
\end{align*}
as required.
\end{proof}

\begin{lemma}
Let $X$ be a set and $u, v\in\FNA$. Then
\begin{equation}
\Delta(u~\diamond~ v)=\Delta(u)~\diamond~\Delta(v).
\mlabel{eq:Morphism}
\end{equation}
\mlabel{lem:calgh}
\end{lemma}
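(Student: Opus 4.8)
The plan is to induct on $n:=\dep(u)+\dep(v)$. The base case $n=0$ means $u,v\in M(X)$ and is precisely Lemma~\mref{lem:I1}, while the case $u=\bfone$ or $v=\bfone$ is immediate from Eq.~(\mref{eq:Init}). So I would fix $n\ge 1$, assume the identity for all pairs of total depth below $n$, and take $u,v\neq\bfone$ with $\dep(u)+\dep(v)=n$. Writing the \diam factorizations $u=u_1\diamond\cdots\diamond u_p$ and $v=v_1\diamond\cdots\diamond v_q$ from Proposition~\mref{pp:cdiam}, Eqs.~(\mref{eq:xexte}) and~(\mref{eq:Forest}) give $\Delta(u)=\Delta(u_1)\diamond\cdots\diamond\Delta(u_p)$ and similarly for $v$; since $\diamond$ is associative (Lemma~\mref{lem:ncfree}) and $\FNA\ot\FNA$ inherits associativity, the whole identity collapses to understanding the single junction $u_p\diamond v_1$.

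Next I would split on the junction. If $u_p\in X$ or $v_1\in X$, the concatenation $u_1,\cdots,u_p,v_1,\cdots,v_q$ is still \diam, so by the uniqueness in Proposition~\mref{pp:cdiam} it is the \diam factorization of $u\diamond v$; Eq.~(\mref{eq:Forest}) and a regrouping then give $\Delta(u\diamond v)=\Delta(u)\diamond\Delta(v)$ with no recourse to the induction hypothesis. The genuine case is $u_p=\lc\lbar{u}_p\rc$ and $v_1=\lc\lbar{v}_1\rc$ both in $\lc\frakX_\infty\rc$. Setting $u':=u_1\diamond\cdots\diamond u_{p-1}$ and $v'':=v_2\diamond\cdots\diamond v_q$ (read as $\bfone$ when empty), associativity gives $u\diamond v=u'\diamond(u_p\diamond v_1)\diamond v''$; as the factorization of $u$ is \diam and $u_p$ is bracketed, $u'$ ends in a letter of $X$ or is $\bfone$, and likewise $v''$ begins with a letter of $X$ or is $\bfone$, so inserting any single bracketed generator $z$ between them yields a \diam sequence and hence $\Delta(u'\diamond z\diamond v'')=\Delta(u')\diamond\Delta(z)\diamond\Delta(v'')$ exactly as in the first case. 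Since $u_p\diamond v_1$ is by Eq.~(\mref{eq:Bdia}) a $\bfk$-linear combination of such generators $z$, linearity reduces everything to the single product $u_p\diamond v_1$ of two bracketed generators.

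The heart of the argument, and the only place the Nijenhuis equation is felt, is the identity $\Delta(\lc\lbar{u}_p\rc\diamond\lc\lbar{v}_1\rc)=\Delta(\lc\lbar{u}_p\rc)\diamond\Delta(\lc\lbar{v}_1\rc)$: when $\dep(u_p)+\dep(v_1)<n$ it is the induction hypothesis, and when it equals $n$ it must be established afresh from the hypothesis at depths $n-1$ and $n-2$. I would expand the left-hand product by the third branch of Eq.~(\mref{eq:Bdia}) into three bracketed terms, apply the one-cocycle rule Eq.~(\mref{eq:Tree}) to each (twice for the doubly bracketed one, producing $\id\ot\NA^2$), and then invoke the induction hypothesis on the three interior products $\lc\lbar{u}_p\rc\diamond\lbar{v}_1$, $\lbar{u}_p\diamond\lc\lbar{v}_1\rc$, $\lbar{u}_p\diamond\lbar{v}_1$, whose total depths are $n-1$, $n-1$, $n-2$. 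On the other side, Eq.~(\mref{eq:Tree}) writes $\Delta(\lc\lbar{u}_p\rc)=\sum\lbar{u}_{p(1)}\ot\lc\lbar{u}_{p(2)}\rc$ and similarly for $v_1$, and expanding the second tensor slot $\lc\lbar{u}_{p(2)}\rc\diamond\lc\lbar{v}_{1(2)}\rc$ again by Eq.~(\mref{eq:Bdia}) produces three matching terms. The hard part will be precisely this term-by-term matching: one must verify that the three summands of the Nijenhuis product, once wrapped by $\id\ot\NA$, coincide with the three summands generated on the output side, with the iterated bracket $\NA^2$ landing on the correct (subtracted) term. Once this atom case is in place, the reductions of the first two paragraphs close the induction.
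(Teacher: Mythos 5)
Your proposal is correct and takes essentially the same route as the paper: reduce everything to the single junction $u_p\diamond v_1$ of the alternating factorizations, treat the case where one factor lies in $X$ directly via Eq.~(\mref{eq:Forest}), and settle the bracketed--bracketed "atom" case by expanding with Eq.~(\mref{eq:Bdia}), applying the one-cocycle rule Eq.~(\mref{eq:Tree}), invoking the induction hypothesis on the three interior products of total depths $n-1$, $n-1$, $n-2$, and recombining the three output terms via Eq.~(\mref{eq:Bdia}) --- exactly the computation the paper carries out. The only difference is organizational: the paper wraps the width reduction in a secondary induction on $\w(u)+\w(v)$ whereas you argue it structurally, but the substance is identical.
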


\begin{proof}
Since $\Delta$ is linear and $\diamond$ is bilinear, we only need to prove the result for
basis elements  $u, v\in \frak X_{\infty}$.
If $u=\bfone$ or $v = \bfone$, without loss of generality, letting $u=\bfone$, then by Eq.~(\mref{eq:Init}),
$$\Delta(u~\diamond~ v)= \Delta(v) = (\bfone \ot \bfone) ~\diamond~ \Delta(v) = \Delta(u)~\diamond~\Delta(v).$$
Assume $u\neq \bfone$ and $v\neq \bfone$, and we proceed to prove this case by using induction on the sum of the depths
$$n:=\dep(u) + \dep(v).$$
For the initial step of $n=0$, we have $\dep(u)=\dep(v) =0$ and so $u,v\in M(X)$.
Then the result is valid from Lemma~\mref{lem:I1}.

For the inductive step, assume the result holds for $n\leq k$ for a $k\geq 0$, and consider the case of $n=k+1$.
We reduce to prove the result by induction on the sum of the widths
$$m:=\w(u)+\w(v)\geq 2.$$

If $m=2$, then $\w(u)=\w(v)=1$. Consider first $u\in X$ or $v\in X$. Then the result holds by Eq.~(\mref{eq:Forest}).
Consider next $u\notin X$ and $v\notin X$. Then by $\w(u)=\w(v)=1$, we can write
$$u:=\lc\lbar{u}\rc\,\text{ and }\, v :=\lc\lbar{v}\rc \,\text{ for some } \lbar{u},\lbar{v}\in \frak X_{\infty}.$$
Using the Sweedler notation, we denote by
$$\Delta(\lbar{u})=\sum_{(\lbar{u})}\lbar{u}_{(1)}\otimes \lbar{u}_{(2)} \text{ and }
\Delta(\lbar{v})=\sum_{(\lbar{v})}\lbar{v}_{(1)}\otimes \lbar{v}_{(2)}.$$
Thus we have
\allowdisplaybreaks{
\begin{align*}
&\Delta(u~\diamond~ v) = \Delta\left(\lc\lbar{u}\rc~\diamond~ \lc\lbar{v}\rc\right)
= \Delta(\lc\lbar{u}~\diamond~ \lc\lbar{v}\rc+\lc\lbar{u}\rc~\diamond~ \lbar{v}
-\lc\lbar{u}~\diamond~\lbar{v}\rc\rc) \\
=&\,(\id\ot \NA)\Delta\left(\lbar{u}~\diamond~ \lc\lbar{v}\rc+\lc\lbar{u}\rc~\diamond~ \lbar{v}
-\lc\lbar{u}~\diamond~\lbar{v}\rc\right)\quad (\text{by Eq.~(\mref{eq:Tree})})\\
=&\,(\id\ot  \NA)
\left(\Delta(\lbar{u}) ~\diamond~\Delta(\lc\lbar{v}\rc)+\Delta(\lc\lbar{u}\rc)~\diamond~\Delta(\lbar{v})
-\Delta(\lc\lbar{u}~\diamond~\lbar{v}\rc)\right)\quad(\text{by the induction hypothesis on~}n)\\
=&\,(\id\ot  \NA)\bigg(\Delta(\lbar{u})~\diamond~((\id\ot  \NA)\Delta(\lbar{v}))
+((\id\ot \NA)\Delta(\lbar{u}))~\diamond~\Delta(\lbar{v})-(\id\ot \NA)(\Delta(\lbar{u})
~\diamond~\Delta(\lbar{v}))\bigg)
\quad(\text{by Eq.~(\mref{eq:Tree})})\\
=&\,(\id\ot  \NA)\bigg(\sum_{(\lbar{u}),\,(\lbar{v}) }
(\lbar{u}_{(1)}~\diamond~ \lbar{v}_{(1)})\ot (\lbar{u}_{(2)}~\diamond~ \lc\lbar{v}_{(2)}\rc)
+\sum_{(\lbar{u}),\,(\lbar{v}) } (\lbar{u}_{(1)}~\diamond~ \lbar{v}_{(1)})\ot
(\lc\lbar{u}_{(2)}\rc~\diamond~ \lbar{v}_{(2)}) \\
& - \sum_{(\lbar{u}),\,(\lbar{v}) }
(\lbar{u}_{(1)}~\diamond~ \lbar{v}_{(1)})\ot\lc\lbar{u}_{(2)}~\diamond~ \lbar{v}_{(2)}\rc\bigg)\\
=&\,\sum_{(\lbar{u}),\,(\lbar{v}) }
(\lbar{u}_{(1)}~\diamond~ \lbar{v}_{(1)})\ot \bigg(\lc \lbar{u}_{(2)}~\diamond~ \lc\lbar{v}_{(2)}\rc \rc +  \lc \lc\lbar{u}_{(2)}\rc~\diamond~ \lbar{v}_{(2)} \rc - \lc \lc\lbar{u}_{(2)}~\diamond~ \lbar{v}_{(2)}\rc\rc \bigg)\\
=&\,\sum_{(\lbar{u}),\,(\lbar{v}) }
(\lbar{u}_{(1)}~\diamond~ \lbar{v}_{(1)})\ot (\lc\lbar{u}_{(2)}\rc~\diamond~ \lc\lbar{v}_{(2)}\rc)\quad(\text{by Eq.~}(\mref{eq:Bdia}))\\
=&\,\left(\sum_{(\lbar{u})}\lbar{u}_{(1)}\ot \lc\lbar{u}_{(2)}\rc\right)~\diamond~
\left(\sum_{(\lbar{v})} \lbar{v}_{(1)}\ot \lc\lbar{v}_{(2)}\rc\right)\\
=&\,((\id\ot \NA)\Delta(\lbar{u}))~\diamond~((\id\ot \NA)\Delta(\lbar{v}))\\
=&\,\Delta(\lc\lbar{u}\rc)~\diamond~\Delta(\lc\lbar{v}\rc)
\quad(\text{by Eq.~(\mref{eq:Tree})})\\
=&\, \Delta(u)~\diamond~\Delta(v).
\end{align*}
}
This completes the initial step of the induction on $\w(u)+\w(v)$.

Assume that Eq.~(\mref{eq:Morphism})
holds for the case of
$n= k+1$ and $2\leq m \leq \ell$,
and consider the case of $n=k+1$ and $m=\ell+1$. Then $m\geq3$ and so either $\w(u)\geq 2$ or $\w(v)\geq 2$.
We have three cases to prove:
\begin{enumerate}
\item  $\w(u)\geq2,\w(v)\geq2$;

\item $\w(u)\geq2,\w(v)=1$;

\item $\w(u)=1,\w(v)\geq2$.
\end{enumerate}
We only prove the first case, because other cases are easier and can be treated in the same way.
So suppose
$$\w(u)\geq2\,\text{ and }\, \w(v)\geq2.$$
From Proposition~\mref{pp:cdiam}, there are unique alternating sequences $u_{1}, \cdots,u_{p}$ and $v_{1},\cdots ,v_{q}$ such that
$$u=u_{1}~\diamond~\cdots ~\diamond~ u_{p}\,\text{ and }\,v=v_{1}~\diamond~\cdots ~\diamond~ v_{q},$$
and so
$$u~\diamond~ v = u_{1}~\diamond~\cdots ~\diamond~u_{p-1}~\diamond~(u_{p}\diamond v_{1}) ~\diamond~v_{2}\cdots ~\diamond~v_{q}.$$
If $u_{p}\notin \lc \frak X_\infty\rc$ or $v_1\notin \lc \frak X_\infty\rc$,
then by Eq.~(\mref{eq:Forest}),
\begin{align*}
\Delta(u~\diamond~ v) = & \Delta(u_{1}~\diamond~ \cdots ~\diamond~u_{p} ~\diamond~
v_{1}~\diamond~ \cdots ~\diamond~v_{q})\\
=& \Delta(u_{1}) ~\diamond~ \cdots ~\diamond~ \Delta(u_{p}) ~\diamond~
\Delta(v_{1})~\diamond~ \cdots ~\diamond~ \Delta(v_{q})\\
=& \Delta(u_{1} ~\diamond~ \cdots ~\diamond~ u_{p}) ~\diamond~
\Delta(v_{1}~\diamond~ \cdots \, ~\diamond~\,v_{q})\\
=& \Delta(u) ~\diamond~ \Delta(v).
\end{align*}
If $u_{p} \in \lc \frak X_\infty\rc$ and $v_1 \in \lc \frak X_\infty\rc$, let
$$u_{p}=\lc\lbar{u}_{p}\rc\,\text{ and }\,v_{1}=\lc\lbar{v}_{1}\rc \, \text{ for some } \lbar{u}_{p},\lbar{v}_{1}\in \frak X_\infty.$$
By Eq.~(\mref{eq:Bdia}), we may suppose
$$u_{p}~\diamond~ v_{1}= \sum_{i} c_i \lc w_{i}\rc ,\text{ where each }\, c_i\in \bfk\setminus\{0\}, w_{i}\in\frak X_{\infty}.$$
So we get
\begin{align*}
\Delta(u~\diamond~ v)
&=\Delta(u_{1}~\diamond~\cdots ~\diamond~u_{p-1}~\diamond~(u_{p}\,~\diamond~ \, v_{1})~\diamond~v_{2}~\diamond~\cdots ~\diamond~v_{q})\\
%
%
&= \sum_{i}c_i \Delta(u_{1} ~\diamond~ \cdots ~\diamond~ u_{p-1} ~\diamond~ \lc w_{i}\rc
 ~\diamond~ v_{2} ~\diamond~ \cdots ~\diamond~ v_{q})\\
&=\sum_{i}c_i \Delta(u_{1})~\diamond~\cdots~\diamond~ \Delta(u_{p-1})~\diamond~\Delta(\lc w_{i}\rc)~\diamond~\Delta( v_{2})~\diamond~ \cdots~\diamond~ \Delta(v_{q})  \quad\text{(by Eq.~(\mref{eq:Forest}))}\\
&=\Delta(u_{1})~\diamond~\cdots~\diamond~ \Delta(u_{p-1})~\diamond~\Delta(u_{p}~\diamond~ v_{1})~\diamond~\Delta( v_{2})~\diamond~ \cdots~\diamond~ \Delta(v_{q})\\
&=\Delta(u_{1})~\diamond~\cdots~\diamond~ \Delta(u_{p-1})~\diamond~\Delta(u_{p})~\diamond~ \Delta(v_{1})~\diamond~\Delta( v_{2})~\diamond~ \cdots~\diamond~ \Delta(v_{q})\quad(\text{by the case of } m=2)\\
&=\Delta(u_{1}~\diamond~\cdots~\diamond~ u_{p})~\diamond~ \Delta(v_{1}~\diamond~\cdots~\diamond~ v_{q}) \quad\text{(by Eq.~(\mref{eq:Forest}))}\\
&=\Delta(u)~\diamond~\Delta(v).
\end{align*}
This completes the induction on $\w(u)+\w(v)=m$ and so the induction on $\dep(u)+\dep(v)=n$.
\end{proof}

\begin{lemma}
Let $X$ be a set. Then $\Delta$ is a coproduct on $\FNA$, that is, $\Delta$ satisfies the coassociative law
\begin{equation}
(\Delta\ot\id)\Delta(w)=(\id\ot\Delta)\Delta(w) \,\text{ for }\, w\in\FNA.
\mlabel{eq:coass}
\end{equation}
\mlabel{lem:coasso}
\end{lemma}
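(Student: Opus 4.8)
The plan is to reduce to basis elements by linearity and then exploit that $\Delta$ is multiplicative. Set
$$\Phi := (\Delta\ot\id)\Delta \qquad\text{and}\qquad \Psi := (\id\ot\Delta)\Delta,$$
both viewed as $\bfk$-linear maps $\FNA\to \FNA\ot\FNA\ot\FNA$. By Lemma~\ref{lem:calgh}, $\Delta$ is a homomorphism for $\diamond$, and since $\Delta\ot\id$ and $\id\ot\Delta$ are homomorphisms of the corresponding tensor-product algebras, both $\Phi$ and $\Psi$ are algebra homomorphisms. Hence the equalizer $E:=\{w\in\FNA\mid \Phi(w)=\Psi(w)\}$ is a $\diamond$-subalgebra of $\FNA$: it is a $\bfk$-subspace, and if $\Phi(a)=\Psi(a)$ and $\Phi(b)=\Psi(b)$ then $\Phi(a\diamond b)=\Phi(a)\diamond\Phi(b)=\Psi(a)\diamond\Psi(b)=\Psi(a\diamond b)$. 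By Proposition~\ref{pp:cdiam} every basis element is a $\diamond$-product of elements of $\ID=X\sqcup\lc\frak X_\infty\rc$, so $\frak X_\infty$ lies in the $\diamond$-subalgebra generated by $\ID$; it therefore suffices to prove $\ID\subseteq E$.

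I would establish $\ID\subseteq E$ by strong induction on depth, proving the claim $P(d)$ that every $w\in\frak X_\infty$ with $\dep(w)\leq d$ lies in $E$. The base case $P(0)$ is immediate: for $w\in M(X)$ we have $\Delta(w)=\bfone\ot w$ by Lemma~\ref{lem:delw}, so $\Phi(w)=\bfone\ot\bfone\ot w=\Psi(w)$ using Eq.~\ref{eq:Init}; in particular $X\subseteq E$. For the inductive step, assume $P(d-1)$ and take $w$ of depth $d$. Writing $w=w_1\diamond\cdots\diamond w_m$ with each $w_i\in\ID$ via Proposition~\ref{pp:cdiam}, it is enough, since $E$ is a subalgebra, to show each factor $w_i\in E$. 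If $w_i\in X$ this is the base case; otherwise $w_i=\lc\lbar{w_i}\rc$ with $\dep(\lbar{w_i})=\dep(w_i)-1\leq d-1$, so $\lbar{w_i}\in E$ by $P(d-1)$, and it remains to pass from $\lbar{w_i}$ to $\lc\lbar{w_i}\rc$.

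That last passage is the heart of the argument, where the one-cocycle condition Eq.~\ref{eq:cpl} (equivalently Eq.~\ref{eq:Tree}) does the work. For $\lbar w\in\frak X_\infty$ I would verify the two identities
$$\Phi(\lc\lbar w\rc)=(\id\ot\id\ot\NA)\,\Phi(\lbar w),\qquad \Psi(\lc\lbar w\rc)=(\id\ot\id\ot\NA)\,\Psi(\lbar w).$$
Writing $\Delta(\lbar w)=\sum_{(\lbar w)}\lbar w_{(1)}\ot\lbar w_{(2)}$ and using $\Delta(\lc\lbar w\rc)=(\id\ot\NA)\Delta(\lbar w)$ from Eq.~\ref{eq:Tree}, the first identity gives $\Phi(\lc\lbar w\rc)=\sum\Delta(\lbar w_{(1)})\ot\lc\lbar w_{(2)}\rc$, which is just $(\id\ot\id\ot\NA)$ applied to $\Phi(\lbar w)=\sum\Delta(\lbar w_{(1)})\ot\lbar w_{(2)}$. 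The second identity requires $\Delta\,\NA=(\id\ot\NA)\Delta$, i.e. Eq.~\ref{eq:cpl}, to rewrite $\Delta(\lc\lbar w_{(2)}\rc)=(\id\ot\NA)\Delta(\lbar w_{(2)})$ inside the last tensor slot. Granting both identities, if $\lbar w\in E$ then $\Phi(\lbar w)=\Psi(\lbar w)$, and applying $\id\ot\id\ot\NA$ to this equality yields $\Phi(\lc\lbar w\rc)=\Psi(\lc\lbar w\rc)$, so $\lc\lbar w\rc\in E$. This closes the induction, gives $\ID\subseteq E$, hence $E=\FNA$ and the coassociativity Eq.~\ref{eq:coass}.

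The step I expect to be the main obstacle is the careful Sweedler-level bookkeeping behind the two displayed identities, specifically verifying that $(\Delta\ot\id)$ and $(\id\ot\Delta)$ commute with the operator $\NA$ that Eq.~\ref{eq:Tree} inserts into the second tensor factor; for $\Psi$ this commutation is exactly the cocycle property and is the one place where Eq.~\ref{eq:cpl} is indispensable. By contrast the multiplicative (width $\geq 2$) case costs nothing once $E$ is known to be a subalgebra, which is precisely the payoff of Lemma~\ref{lem:calgh}.
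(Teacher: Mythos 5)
Your argument is correct, and it rests on exactly the same three ingredients as the paper's proof: Lemma~\ref{lem:delw} for the depth-zero base case, Eq.~(\ref{eq:Tree}) (the cocycle property) for passing from $\lbar w$ to $\lc\lbar w\rc$, and the multiplicativity of $\Delta$ from Lemma~\ref{lem:calgh} for handling products. The difference is organizational: the paper runs a double induction, on $\dep(w)$ and then on $\w(w)$ within each depth, and disposes of the width~$\geq 2$ case by an explicit Sweedler-notation computation showing that both $(\Delta\ot\id)\Delta$ and $(\id\ot\Delta)\Delta$ applied to $w_1'\diamond w_2'$ factor as the $\diamond$-product of the corresponding expressions for $w_1'$ and $w_2'$. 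You instead observe once and for all that $\Phi=(\Delta\ot\id)\Delta$ and $\Psi=(\id\ot\Delta)\Delta$ are algebra homomorphisms, so their equalizer $E$ is a unital $\diamond$-subalgebra, and Proposition~\ref{pp:cdiam} reduces everything to showing $\ID\subseteq E$; this collapses the inner induction on width and isolates the only genuinely new computation, namely the two identities $\Phi(\lc\lbar w\rc)=(\id\ot\id\ot\NA)\Phi(\lbar w)$ and $\Psi(\lc\lbar w\rc)=(\id\ot\id\ot\NA)\Psi(\lbar w)$, both of which you verify correctly (the second being the one place Eq.~(\ref{eq:cpl}) is indispensable, exactly as you say). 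Your depth bookkeeping is also sound, since $\dep(\lc u\rc)=\dep(u)+1$ and each factor of the alternating factorization has depth at most that of $w$. What the paper's version buys is self-containedness at the level of explicit tensor calculations; what yours buys is a shorter proof with the structural reason for coassociativity (equalizers of algebra maps are subalgebras, plus the generators $\ID$) made visible.
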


\begin{proof}
We prove the result by induction on depth $n:=\dep(w)$ of the basis elements $w\in \frakX_\infty$.
For the initial step of $n=0$, we have $w\in M(X)$.
By Lemma~\mref{lem:delw}, we get $\Delta(w)= \bfone\ot w$ and so
\begin{align*}
(\id\otimes\Delta)\Delta(w)=&\ (\id\otimes\Delta) (\bfone \ot w) = \bfone \ot \Delta(w) =\bfone\ot \bfone \ot w \\
=& \ \Delta(\bfone) \ot w =(\Delta\ot\id)(\bfone \ot w) = (\Delta\ot\id)\Delta(w).
\end{align*}

For the inductive step, assume the result is true for the case of $n\leq k$ for a $k\geq 0$,
and consider the case of $n=k+1$. We next reduce to prove the result by induction on the width $m:=\w(w)$.
Since $\dep(w) = k+1 \geq 1$, we have $w\neq \bfone$ and so
$m=\w(w)\geq 1$. If $m=1$, since $\dep(w)\geq 1$, we can write
$$w=\lc \overline{w}\rc = \NA(\overline{w}) \,\text{ for some } \overline{w}\in \frakX_k.$$
So we get
\begin{align*}
(\id\ot \Delta)\Delta(w)
&=(\id\ot \Delta)\Delta(\NA(\lbar{w}))\\
&=(\id\ot\Delta)(\id\ot \NA)\Delta(\lbar{w})\quad (\text{by Eq.~(\mref{eq:Tree})})\\
&=(\id\ot (\Delta \NA))\Delta(\lbar{w})\quad (\text{by Eq.~(\mref{eq:Init})})\\
&= \Big(\id\ot \big( (\id \ot \NA)\Delta \big) \Big)\Delta(\lbar{w})\quad (\text{by Eq.~(\mref{eq:Tree})})\\
&=(\id\ot\id\ot \NA)(\id\ot\Delta)\Delta(\lbar{w})\\
&=(\id\ot\id\ot \NA)(\Delta\ot\id)\Delta(\lbar{w})\quad(\text{by the induction hypothesis on~}n)\\
&=(\Delta\ot \NA)\Delta(\lbar{w})\\
&=(\Delta\ot \id)(\id\ot \NA)\Delta(\lbar{w})\\
&=(\Delta\ot\id)\Delta(w)\quad (\text{by Eq.~(\mref{eq:Tree})}).
\end{align*}
Assume that the result is valid for the case of
$n=k+1$ and $m\leq \ell$, and consider the case of $n=k+1$ and $m=\ell+1\geq 2$.
So we can suppose
$$w=w'_1 w'_{2}=w'_1~\diamond~w'_{2},$$
where
$$\w(w'_{1})+\w(w'_{2})=\ell+1 \,\text{ and }\, 1\leq \w(w'_{1}),\,\w(w'_{2})\leq\ell.$$
Using the Sweedler notation, we can write
$$\Delta(w'_{1})=\sum^{}_{(w'_{1})}w'_{1(1)}\ot w'_{1(2)}\,\text{ and }\,
\Delta(w'_{2})=\sum^{}_{(w'_{2})}w'_{2(1)}\ot w'_{2(2)}.$$
By the induction hypothesis on $m$, we have
$$(\Delta\ot\id)\Delta(w'_{1})
=(\id\ot\Delta)\Delta(w'_{1})\,\text{ and }\,(\Delta\ot\id)\Delta(w'_{2})
=(\id\ot\Delta)\Delta(w'_{2}),$$
that is,
\begin{equation}
\sum^{}_{(w'_{1})}\Delta(w'_{1(1)})\ot w'_{1(2)}
=\sum^{}_{(w'_{1})}w'_{1(1)}\ot\Delta(w'_{1(2)})\,\text{ and }\,
\sum^{}_{(w'_{2})}\Delta(w'_{2(1)})\ot w'_{2(2)}
=\sum^{}_{(w'_{2})}w'_{2(1)}\ot\Delta( w'_{2(2)}).
\mlabel{eq:coass0}
\end{equation}
So we get
\begin{align*}
(\id\ot\Delta)\Delta(w)
&=(\id\ot\Delta)\Delta(w'_{1}~\diamond~ w'_{2})\\
&=(\id\ot\Delta)(\Delta(w'_{1})~\diamond~\Delta(w'_{2}))
\quad(\text{by Eq.~(\mref{eq:Morphism})})\\
&=\sum^{}_{(w'_{1})}\sum^{}_{(w'_{2})}(w'_{1(1)}~\diamond~ w'_{2(1)})\ot\Delta(w'_{1(2)}~\diamond~ w'_{2(2)})
\quad(\text{by linearity})\\
&=\sum^{}_{(w'_{1})}\sum^{}_{(w'_{2})}(w'_{1(1)}~\diamond~ w'_{2(1)})\ot(\Delta(w'_{1(2)})~\diamond~\Delta(w'_{2(2)}))
\quad(\text{by Eq.~(\mref{eq:Morphism})})\\
&=\left(\sum^{}_{(w'_{1})}w'_{1(1)}\ot\Delta(w'_{1(2)})\right)~\diamond~
\left(\sum^{}_{(w'_{2})}w'_{2(1)}\ot\Delta( w'_{2(2)})\right).
\end{align*}
With the similar argument, we can get
$$ (\Delta\ot\id)\Delta(w) =
\left(\sum^{}_{(w'_{1})}\Delta(w'_{1(1)})\ot w'_{1(2)}\right)~\diamond~
\left(\sum^{}_{(w'_{2})}\Delta( w'_{2(1)})\ot w'_{2(2)}\right).$$
Thus Eq.~(\mref{eq:coass}) holds by Eq.~(\mref{eq:coass0}).
This completes the inductive proof.
\end{proof}

We turn to construct a left counit on $\FNA$. Define a linear map:
\begin{equation}
\vep: \FNA\rightarrow\bfk, \,  \bfone \mapsto 1_\bfk, \, w \mapsto 0  \,\text{ for } w\in \frak X_{\infty}\setminus\{\bfone\}.
\mlabel{eq:vep}
\end{equation}
The next result shows that $\varepsilon$ is compatible with the product $\diamond$.

\begin{lemma}
Let $X$ be a set and $u, v\in\FNA$. Then
\begin{equation*}
\varepsilon(u~\diamond~ v)=\varepsilon(u)\varepsilon(v).
\end{equation*}
\mlabel{lem:counitprod}
\end{lemma}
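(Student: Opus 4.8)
The plan is to exploit the fact that $\vep$, as defined in Eq.~(\mref{eq:vep}), detects exactly the $\bfone$-component of a basis element. Since $\vep$ is linear and $\diamond$ is bilinear, I would first reduce to the case of basis elements $u,v\in\frak X_\infty$. The right-hand side $\vep(u)\vep(v)$ equals $1_\bfk$ precisely when $u=v=\bfone$ and vanishes otherwise, so everything hinges on locating the $\bfone$-term of $u\diamond v$.

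When $u=v=\bfone$, I would simply note that $\bfone\diamond\bfone=\bfone$ by the definition of $\diamond$, whence $\vep(u\diamond v)=\vep(\bfone)=1_\bfk=\vep(u)\vep(v)$. When exactly one factor is the identity, say $u=\bfone$ and $v\neq\bfone$, then $u\diamond v=v\in\frak X_\infty\setminus\{\bfone\}$, so $\vep(u\diamond v)=0=\vep(u)\vep(v)$ by Eq.~(\mref{eq:vep}).

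The remaining case $u\neq\bfone\neq v$ is the crux, where $\vep(u)\vep(v)=0$ and I must show $\vep(u\diamond v)=0$. I would do this by proving the stronger structural claim that $u\diamond v$ is a $\bfk$-linear combination of basis elements, \emph{none} of which is $\bfone$; applying $\vep$ then annihilates every term. This claim I would establish by induction on $n:=\dep(u)+\dep(v)$, following the recursive definition of $\diamond$ in Eqs.~(\mref{eq:Bdia}) and~(\mref{eq:cdiam}). In the base case $n=0$ both factors lie in $M(X)\setminus\{\bfone\}$, so $u\diamond v$ is a nonempty concatenation and hence a single basis word $\neq\bfone$. For the inductive step I would pass to the standard decompositions $u=u_1\cdots u_m$, $v=v_1\cdots v_{m'}$ and reduce via Eq.~(\mref{eq:cdiam}) to the breadth-one situation of Eq.~(\mref{eq:Bdia}): the first two cases give nonempty concatenations, while in the third case, with $u=\lc\lbar{u}\rc$ and $v=\lc\lbar{v}\rc$, each summand has the outer form $\lc\,\cdots\,\rc$ and thus lies in $\lc\frak X_\infty\rc\subseteq\frak X_\infty\setminus\{\bfone\}$, the inner products being controlled by the induction hypothesis on $n$.

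The main obstacle I anticipate is precisely this bracketed breadth-one case: one must rule out that the three summands of Eq.~(\mref{eq:Bdia}), after expansion and possible combination, conspire to produce a $\bfone$-component. This is safe because each summand is of the form $\lc v\rc$ for some $v\in\frak X_\infty$, and any $\bfk$-combination of elements of $\lc\frak X_\infty\rc$ remains within the span of $\lc\frak X_\infty\rc$, which is disjoint from $\bfk\bfone$; hence no cancellation can ever manufacture an identity term. The rest is routine bookkeeping, since concatenating the middle product $u_m\diamond v_1$ with the remaining nonempty standard factors in Eq.~(\mref{eq:cdiam}) can only yield non-identity words.
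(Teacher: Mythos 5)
Your proposal is correct and takes essentially the same route as the paper: reduce to basis elements by linearity, dispose of the cases where a factor equals $\bfone$, and observe that for $u\neq\bfone\neq v$ the product $u\diamond v$ has no $\bfone$-component, so $\vep$ kills it. The only difference is one of detail: the paper simply asserts ``$u\diamond v\neq\bfone$'' at this point, whereas you justify it by induction on depth through Eqs.~(\mref{eq:Bdia}) and~(\mref{eq:cdiam}), correctly noting that the bracketed summands all lie in the span of $\lc\frak X_\infty\rc$, which is disjoint from $\bfk\bfone$.
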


\begin{proof}
By linearity of $\varepsilon$, it is sufficient to prove the result for basis elements $u, v\in \frak X_\infty$.
If $u= \bfone$ or $v=\bfone$, by symmetry, let $u= \bfone$. Then $\vep(u) = 1_\bfk$ and
$$\varepsilon(u~\diamond~ v) =\varepsilon(v) = 1_\bfk \varepsilon(v)=\varepsilon(u)\varepsilon(v).$$
Suppose $u\neq \bfone$ and $v\neq \bfone$. Then $u~\diamond~v\neq \bfone$ and so by Eq.~(\mref{eq:vep})
$$ \varepsilon(u~\diamond~ v) =0 = \varepsilon(u)\varepsilon(v),$$
as required.
\end{proof}

\begin{lemma}
Let $X$ be a set. Then the $\vep$ given in Eq.~(\mref{eq:vep}) is a left counit on $\FNA$, that is, $\vep$ satisfies the left counicity
\begin{equation*}
(\vep\ot\id)\Delta(w)=\beta_{\ell}(w)\,\text{ for } w\in\FNA,
\end{equation*}
where $\beta_\ell:\FNA\rightarrow \bfk\ot\FNA$ is given by $w\mapsto 1_{\bfk}\ot w$.
\mlabel{lem:counitt}
\end{lemma}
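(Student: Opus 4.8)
The plan is to establish the left counicity $(\vep\ot\id)\Delta(w)=\beta_\ell(w)=1_\bfk\ot w$ for all basis elements $w\in\frak X_\infty$, the general statement then following by linearity of $\vep$ and $\Delta$. I would use a double induction that mirrors the proofs of Lemmas~\mref{lem:calgh} and~\mref{lem:coasso}: an outer induction on the depth $n:=\dep(w)$ and, for each fixed depth, an inner induction on the width $m:=\w(w)$. For the base case $n=0$ we have $w\in M(X)$, so Lemma~\mref{lem:delw} gives $\Delta(w)=\bfone\ot w$ and hence $(\vep\ot\id)\Delta(w)=\vep(\bfone)\ot w=1_\bfk\ot w=\beta_\ell(w)$ by the definition of $\vep$ in Eq.~(\mref{eq:vep}).

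For the inductive step I would fix $n=k+1$, assume the identity for all basis elements of depth at most $k$, and carry out the inner induction on $m$. The width-one case is the heart of the argument: since $\dep(w)\geq 1$ and $\w(w)=1$, we may write $w=\lc\lbar{w}\rc$ with $\dep(\lbar{w})=k$, so that $\Delta(w)=(\id\ot\NA)\Delta(\lbar{w})$ by Eq.~(\mref{eq:Tree}). The key point is that $\vep\ot\id$ commutes past $\id\ot\NA$, because $\NA$ acts only on the right-hand tensor factor while $\vep$ acts on the left; thus $(\vep\ot\id)(\id\ot\NA)\Delta(\lbar{w})$ equals $\NA$ applied to the right factor of $(\vep\ot\id)\Delta(\lbar{w})$. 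Invoking the depth induction hypothesis on $\lbar{w}$, this becomes $\NA$ applied to $1_\bfk\ot\lbar{w}$, namely $1_\bfk\ot\lc\lbar{w}\rc=1_\bfk\ot w=\beta_\ell(w)$.

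For the inner inductive step $m=\ell+1\geq 2$, I would factor $w=w'_1\diamond w'_2$ along its \cdiam factorization with $1\leq\w(w'_1),\w(w'_2)\leq\ell$ and $\dep(w'_1),\dep(w'_2)\leq k+1$, so that the width induction hypothesis applies to each factor. Using the compatibility of $\Delta$ with $\diamond$ (Lemma~\mref{lem:calgh}, Eq.~(\mref{eq:Morphism})) we obtain $\Delta(w)=\Delta(w'_1)\diamond\Delta(w'_2)$, while the multiplicativity of $\vep$ (Lemma~\mref{lem:counitprod}) shows that $\vep\ot\id$ is an algebra map from $\FNA\ot\FNA$ into $\bfk\ot\FNA\cong\FNA$; concretely, $(\vep\ot\id)(A\diamond B)=\big((\vep\ot\id)A\big)\big((\vep\ot\id)B\big)$. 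Applying this with $A=\Delta(w'_1)$ and $B=\Delta(w'_2)$, and then the width induction hypothesis $(\vep\ot\id)\Delta(w'_i)=1_\bfk\ot w'_i$, gives $(\vep\ot\id)\Delta(w)=(1_\bfk\ot w'_1)(1_\bfk\ot w'_2)=1_\bfk\ot(w'_1\diamond w'_2)=1_\bfk\ot w$, completing both inductions.

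I expect the main obstacle to be the width-one case: making precise the interchange of $\vep\ot\id$ with $\id\ot\NA$ and confirming that passing from $w=\lc\lbar{w}\rc$ to $\lbar{w}$ lowers the depth to exactly $k$, so that the outer induction hypothesis is genuinely available. Once that step is in place, the case $m\geq 2$ is routine, resting entirely on the previously established compatibilities of $\Delta$ and $\vep$ with $\diamond$.
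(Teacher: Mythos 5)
Your proposal is correct and follows essentially the same route as the paper's proof: the same outer induction on depth with an inner induction on width, Lemma~\mref{lem:delw} for the base case, the interchange of $\vep\ot\id$ with $\id\ot\NA$ via Eq.~(\mref{eq:Tree}) for width one, and Lemmas~\mref{lem:calgh} and~\mref{lem:counitprod} for width at least two. No gaps.
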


\begin{proof}
We prove the result by induction on depth $n:=\dep(w)$ of basis elements $w\in \frakX_\infty$.
For the initial step of $n=0$, we have $w\in M(X)$.
By Lemma~\mref{lem:delw}, we obtain $\Delta(w) = \bfone \ot w$
and so
\begin{align*}
(\vep\ot\id)\Delta(w)=(\vep\ot\id) (\bfone \ot w) =\vep(\bfone)\ot  w
=1_{\bfk}\ot w =\beta_\ell(F).
\end{align*}

For the induction step, assume the result is true for the case of $n\leq k$ for a $k\geq 0$, and consider the case of $n=k+1$.
We next reduce to prove the result by induction on the width $m:=\w(w)$.
Since $\dep(w) = k+1 \geq 1$, we have $w\neq \bfone$ and so
$m=\w(w)\geq 1$. If $m=1$, since $\dep(w)\geq 1$, we can write
$$w=\NA(\overline{w})\,\text{ for some } \overline{w}\in \frakX_k,$$
which implies
\begin{align*}
(\vep\ot \id )\Delta(w)
&= (\vep\ot\id )\Delta(\NA(\lbar{w}))\\
&= (\vep\ot \id )(\id\ot \NA)\Delta(\lbar{w})\quad(\text{by Eq.~(\mref{eq:Tree})})\\
&=(\id\ot \NA)(\vep\ot \id )\Delta(\lbar{w})\\
&= (\id\ot \NA)(1_{\bfk}\ot\lbar{w})
\quad( \text{by the induction hypothesis on~}n)\\
&= 1_{\bfk}\ot w =\beta_\ell(w).
\end{align*}

Assume the result holds for the case of $n=k+1$ and $m\leq \ell$, and consider
the case of $n=k+1$ and $m=\ell+1\geq 2$. So we can write
$$w=w_{1}~\diamond~ w_{2},$$
where
$$\w(w_{1})+\w(w_{2})=\ell+1\,\text{ and }\, 1\leq \w(w_{1}),\,\w(w_{2})\leq\ell.$$
Using the Sweedler notation, we may write
$$\Delta(w_{1})=\sum^{}_{(w_{1})}w_{1(1)}\ot w_{1(2)}\,\text{ and }\,
\Delta(w_{2})=\sum^{}_{(w_{2})}w_{2(1)}\ot w_{2(2)}.$$
From the induction on $m$, we get
\begin{equation*}
(\vep\ot\id)\Delta(w_{1})=\beta_{\ell}(w_{1})\,\text{ and }\, (\vep\ot\id)\Delta(w_{2})=\beta_{\ell}(w_{2}),
\end{equation*}
that is,
\begin{eqnarray}
\sum^{}_{(w_{1})}\vep(w_{1(1)})\ot w_{1(2)}=1_{\bfk}\ot w_{1}\text{ and }
\sum^{}_{(w_{2})}\vep(w_{2(1)})\ot w_{2(2)}=1_{\bfk}\ot w_{2},
\mlabel{eq:counit0}
\end{eqnarray}
which implies
\begin{align*}
(\vep\ot\id)\Delta(w)&=(\vep\ot\id)\Delta(w_{1}~\diamond~ w_{2})\\
&=(\vep\ot\id)(\Delta(w_{1})~\diamond~\Delta(w_{2}))
\quad(\text{by Eq.~(\mref{eq:Morphism})})\\
&=\sum_{(w_{1}),\, (w_{2})} \vep(w_{1(1)}~\diamond~ w_{2(1)})\ot(w_{1(2)}~\diamond~ w_{2(2)})\\
&=\sum_{(w_{1}),\,(w_{2})}
(\vep(w_{1(1)})\vep(w_{2(1)}))\ot(w_{1(2)}~\diamond~ w_{2(2)})\quad(\text{by Lemma~\mref{lem:counitprod})}\\
&=\left(\sum^{}_{(w_{1})}\vep(w_{1(1)})\ot w_{1(2)}\right)
\left(\sum^{}_{(w_{2})}\vep(w_{2(1)})\ot w_{2(2)}\right)\\
&=(1_{\bfk}\ot w_{1})(1_{\bfk}\ot w_{2})\quad(\text{by Eq.~(\mref{eq:counit0})})\\
&=1_{\bfk}\ot(w_{1}~\diamond~w_{2})=1_{\bfk}\ot w =\beta_{\ell}(w).
\end{align*}
This completes the induction on the width $\w(w)$ and hence the induction on the depth $\dep(w)$.
\end{proof}

Now we put the pieces together to state our main result of this section.
Define a linear map
$$u : \bfk\rightarrow\FNA,\,\ 1_\bfk \mapsto \bfone.$$

\begin{theorem}
Let $X$ be a set. Then the sextuple $(\FNA, \diamond, u, \Delta, \vep, \NA)$ is a left counital cocycle bialgebra.
\mlabel{thm:main}
\end{theorem}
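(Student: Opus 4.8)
The plan is to simply unwind the definition of a left counital cocycle bialgebra and assemble the lemmas already proved, since each defining condition has been isolated above. By Lemma~\mref{lem:ncfree} the triple $(\FNA,\diamond,u)$ is a unital associative algebra and, $\NA$ being a linear operator, $(\FNA,\diamond,u,\NA)$ is an operated algebra; thus the only things left to verify are that $(\FNA,\Delta,\vep)$ is a left counital coalgebra, that $\Delta$ and $\vep$ are algebra homomorphisms, and that the one-cocycle property Eq.~(\mref{eq:cpl}) holds.

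First I would observe that $(\FNA,\Delta,\vep)$ is a left counital coalgebra: coassociativity $(\Delta\ot\id)\Delta=(\id\ot\Delta)\Delta$ is exactly Lemma~\mref{lem:coasso}, while the left counicity $(\vep\ot\id)\Delta=\beta_\ell$ is exactly Lemma~\mref{lem:counitt}. Next I would check that $\Delta$ and $\vep$ are morphisms of unital algebras: multiplicativity of $\Delta$ is Lemma~\mref{lem:calgh} and $\Delta(\bfone)=\bfone\ot\bfone$ by Eq.~(\mref{eq:Init}), while multiplicativity of $\vep$ is Lemma~\mref{lem:counitprod} and $\vep(\bfone)=1_\bfk$ by Eq.~(\mref{eq:vep}). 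Combined with the preceding paragraph, this already yields that $(\FNA,\diamond,u,\Delta,\vep)$ is a left counital bialgebra.

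It then remains to establish the one-cocycle property $\Delta\NA=(\id\ot\NA)\Delta$, and by linearity it suffices to prove $\Delta(\lc w\rc)=(\id\ot\NA)\Delta(w)$ for each basis element $w\in\frak X_\infty$. The key point is that for any $w$ the element $\lc w\rc$ lies in $\lc\frak X_\infty\rc\subseteq\ID$, hence has width $1$ and depth $\dep(w)+1\geq 1$, so this identity is literally the recursive defining relation Eq.~(\mref{eq:Tree}) for $\Delta$. I expect the only (mild) obstacle to be the bookkeeping needed to confirm that Eq.~(\mref{eq:Tree}) governs \emph{every} such $\lc w\rc$, including the cases $w\in M(X)$ and $w=\bfone$ that are not written out explicitly in the recursion; for the latter one checks directly that $(\id\ot\NA)\Delta(\bfone)=(\id\ot\NA)(\bfone\ot\bfone)=\bfone\ot\lc\bfone\rc=\Delta(\lc\bfone\rc)$, so the cocycle identity indeed holds on all of $\FNA$ and the theorem follows.
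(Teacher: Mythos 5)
Your proposal is correct and follows essentially the same route as the paper's own proof, which likewise assembles Lemma~\mref{lem:ncfree} (operated algebra), Lemmas~\mref{lem:coasso} and~\mref{lem:counitt} (left counital coalgebra), Lemmas~\mref{lem:calgh} and~\mref{lem:counitprod} (algebra morphisms), and the defining relation Eq.~(\mref{eq:Tree}) for the one-cocycle property. Your extra attention to unit preservation and to the edge cases $w\in M(X)$ and $w=\bfone$ in the cocycle identity is harmless and slightly more careful than the paper, but does not change the argument.
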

\begin{proof}
By Lemma~\mref{lem:ncfree}, the quadruple $(\FNA,\diamond,u, \NA)$ is an operated algebra.
From Lemmas~\mref{lem:coasso} and~\mref{lem:counitt}, the triple $(\FNA, \Delta, \vep)$ is a left counital coalgebra.
Finally, $(\FNA, \diamond, u, \Delta, \vep,\NA)$ is a left counital cocycle bialgebra by Eq.~(\mref{eq:Tree}) and Lemmas~\mref{lem:calgh}, \mref{lem:counitprod}.
\end{proof}

\section{Left counital Hopf algebra structures on free Nijenhuis algebras}
\mlabel{sec:hopf}
This section is devoted to a left counital Hopf algebraic structure on a free Nijenhuis algebra $\FNA$.
All algebras considered in this section are assumed to be of characteristic zero.
The following concepts are from~\cite[Definitions~4.1 and~4.4]{ZG}.

\begin{defn}
A left counital bialgebra $(H, m ,u,\Delta,\vep)$ is called a {\bf graded left counital bialgebra} if there are {\bfk}-modules $H^{(n)}, n\geq0$, of $H$ such that
\begin{enumerate}
\item
$H=\bigoplus\limits^{\infty}_{n=0}H^{(n)}$;
\mlabel{it:It1}
\item
$H^{(p)}H^{(q)}\subseteq H^{(p+q)}$;
\mlabel{it:It2}
\item
$\Delta(H^{(n)})\subseteq  (H^{(0)} \ot H^{(n)}) \oplus (\bigoplus\limits^{}_{p+q=n\atop p>0,\,q>0}H^{(p)}\otimes H^{(q)})$ for $n\geq 0$.
\mlabel{it:It3}
\end{enumerate}
Further $H$ is called {\bf connected} if in addition $H^{(0)}= \im u~ (= {\bfk})$ and $\ker\vep=\bigoplus_{n\geq 1}H^{(n)}$.
\end{defn}

Let $A$ be an algebra and $C$ a left counital coalgebra.
Denote $R:=\Hom(C,A)$. For $f,g\in R$, the {\bf convolution product} of $f$ and $g$ can still be defined by
$$f\ast  g:=m_A(f\ot g)\Delta_C.$$

\begin{defn}
Let $H=(H,m,u,\Delta,\vep)$ be a left counital bialgebra. Let $e:=u\vep$.
\begin{enumerate}
\item
A linear map $S$ of $H$ is called a {\bf right antipode} for $H$ if
\begin{equation*}
 \id_H\ast S=e.
\end{equation*}
\item
A left counital bialgebra with a right antipode is called a {\bf left counital right antipode Hopf algebra} or simply a {\bf left counital Hopf algebra}.
\end{enumerate}
\end{defn}

In setting of connected graded left counital bialgebras, right antipodes come for free.

\begin{lemma}{\rm (\cite[Theorem~4.6]{ZG})}
A connected graded  left counital  bialgebra is a left counital Hopf algebra.
\mlabel{lem:conn}
\end{lemma}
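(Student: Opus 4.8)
The plan is to construct a right antipode $S$ directly by induction on the grading, working inside the convolution algebra $\Hom(\FNA,\FNA)$ whose unit is $e=u\vep$. Unwinding the definition $f\ast g=m(f\ot g)\Delta$, a linear map $S$ is a right antipode exactly when $\id\ast S=e$, i.e. $\sum_{(w)}w_{(1)}\diamond S(w_{(2)})=u\vep(w)$ for every $w\in\FNA$. Because the lemma only asks for a \emph{right} antipode (a single linear map solving this one convolution equation), no multiplicativity or anti-homomorphism property of $S$ needs to be verified; this is precisely what makes the one-sided setting manageable.

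The first step is to pin down the shape of the coproduct on a homogeneous element. Fix $w\in H^{(n)}$ with $n\geq1$. Grading axiom (c) together with $H^{(0)}=\bfk\bfone$ lets me write $\Delta(w)=\bfone\ot a+\sum_{p+q=n,\,p,q>0}w_{(1)}\ot w_{(2)}$ for a unique $a\in H^{(n)}$, with each $w_{(1)}\in H^{(p)}$ and $w_{(2)}\in H^{(q)}$. Applying $\vep\ot\id$ and using connectedness (so $\vep$ annihilates $H^{(p)}$ for $p\geq1$) kills every term of the sum and leaves $\vep(\bfone)a=a$; on the other hand left counicity gives $(\vep\ot\id)\Delta(w)=w$. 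Hence $a=w$, so
\[ \Delta(w)=\bfone\ot w+\sum_{p+q=n,\,p,q>0}w_{(1)}\ot w_{(2)}, \]
where now every $w_{(2)}$ lies in some $H^{(q)}$ with $0<q<n$.

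With this normal form in hand I define $S$ recursively. On $H^{(0)}=\bfk\bfone$ I set $S(\bfone):=\bfone$; since $\Delta(\bfone)=\bfone\ot\bfone$ this already gives $\bfone\diamond S(\bfone)=\bfone=e(\bfone)$. Assuming $S$ has been defined on $\bigoplus_{k<n}H^{(k)}$, for $w\in H^{(n)}$ I set
\[ S(w):=-\sum_{p+q=n,\,p,q>0}w_{(1)}\diamond S(w_{(2)}), \]
which makes sense because each $w_{(2)}$ has degree $q<n$, and linearity of $\Delta$ ensures $S$ extends to a well-defined $\bfk$-linear map on $H^{(n)}$. Feeding the normal form into the convolution product then yields, for $n\geq1$, $(\id\ast S)(w)=\bfone\diamond S(w)+\sum_{p,q>0}w_{(1)}\diamond S(w_{(2)})=S(w)-S(w)=0=u\vep(w)$, while the base case covers $n=0$; hence $\id\ast S=e$, as required.

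The step I expect to be the crux is recognizing why left counicity is exactly matched to a right antipode: it is the left counit that forces the degenerate coproduct term to be $\bfone\ot w$ rather than $w\ot\bfone$, and it is precisely this $\bfone$ sitting in the $\id$-slot of $\id\ast S$ that produces the isolated term $\bfone\diamond S(w)=S(w)$ which one solves for. Had only a right counit been available, one would instead obtain $S(w)\diamond\bfone$ and be driven toward a left antipode $S\ast\id=e$; so the asymmetry of the hypotheses is not a defect but the very mechanism driving the construction.
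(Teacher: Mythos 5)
Your proof is correct: the normal form $\Delta(w)=\bfone\ot w+\sum_{p,q>0}w_{(1)}\ot w_{(2)}$ extracted from connectedness and left counicity, followed by the recursive definition $S(w):=-\sum w_{(1)}\diamond S(w_{(2)})$ on the grading, is exactly the standard inductive antipode construction for connected graded bialgebras adapted to the left counital setting. The paper itself gives no proof of this lemma, citing it as Theorem~4.6 of~\cite{ZG}, and your argument is essentially the one carried out there, so there is nothing to fault.
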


There is no accident that $\FNA$ is a connected graded left counital bialgebra, as we will soon see.
For a $w\in \frakX_\infty$, define the {\bf degree} of $w$ to be
\begin{equation}
\deg(w):= \deg_X(w) + \deg_{\NA}(w),
\mlabel{eq:ddeg}
\end{equation}
where $\deg_{\NA}(w)$ (resp. $\deg_X(w)$) denotes the number of occurrences of $\NA$ (resp. $x\in X$) in $w$.
For example, $\deg( \lc x\rc) = 2$ and $\deg(x\lc x\rc) = 3$. Define
\begin{equation*}
\FN^{(n)}:=\bfk\{w\in \frak X_\infty\mid \deg(w)=n\}, \text{ where } n\geq0.
\end{equation*}
Then
\begin{equation}
\FNA=\bigoplus\limits^{\infty}_{n=0}\FN^{(n)}\,,\, \FN^{(0)}= \bfk\,\text{ and }\,\NA(\FN^{(n)})\subseteq \FN^{(n+1)}.
\mlabel{eq:subset}
\end{equation}
Here the inclusion follows from $\deg(\NA(w)) = \deg(w)+1$ for $w\in \frak X_\infty$.
We will show below that the grading as above is compatible with the product $\diamond$ and coproduct $\Delta$
on $\FNA$.

\begin{lemma}
Let $X$ be a set and $\FNA$ the free Nijenhuis algebra on $X$. Then
\begin{equation}
\FN^{(p)}~\diamond~ \FN^{(q)}\subseteq  \FN^{(p+q)} \,\text{ for all } p, q\geq 0.
\mlabel{eq:mgrad}
\end{equation}
\mlabel{lem:mgrad}
\end{lemma}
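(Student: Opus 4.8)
The plan is to exploit bilinearity of $\diamond$ together with the fact that each $\FN^{(n)}$ is by definition the $\bfk$-span of the degree-$n$ bracketed words: it then suffices to fix basis elements $w, w' \in \frak X_\infty$ with $\deg(w) = p$ and $\deg(w') = q$ and show that every monomial occurring in $w \diamond w'$ has degree $p+q$, i.e. $w \diamond w' \in \FN^{(p+q)}$. Two elementary facts drive the whole argument: the additivity of $\deg$ under concatenation, $\deg(uv) = \deg(u) + \deg(v)$ for bracketed words $u,v$ (immediate, since $\deg$ just counts the total number of generators and brackets), and the relation $\deg(\lc u\rc) = \deg(u) + 1$ recorded after Eq.~(\mref{eq:subset}).

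Mirroring Lemma~\mref{lem:calgh}, I would run the main induction on $n := \dep(w) + \dep(w')$, and within each level of $n$ split according to breadth exactly as the inductive definition of $\diamond$ does. The base case $n = 0$ puts $w, w' \in M(X)$, where $w \diamond w' = ww'$ is a concatenation and additivity finishes it. In the inductive step the case $w = \bfone$ or $w' = \bfone$ is trivial since $\deg(\bfone) = 0$, and the two mixed breadth-one cases of Eq.~(\mref{eq:Bdia}) are again concatenations $ww'$, a single basis element of degree $p+q$.

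The only case carrying real content is the breadth-one bracket--bracket case of Eq.~(\mref{eq:Bdia}): with $w = \lc\lbar w\rc$ and $w' = \lc\lbar{w}'\rc$, so $\deg(\lbar w) = p-1$ and $\deg(\lbar{w}') = q-1$, the product expands as $\lc w \diamond \lbar{w}'\rc + \lc \lbar w \diamond w'\rc - \lc\lc \lbar w \diamond \lbar{w}'\rc\rc$. Each of the three interior products $w \diamond \lbar{w}'$, $\lbar w \diamond w'$, $\lbar w \diamond \lbar{w}'$ has strictly smaller depth-sum than $n$, so the induction hypothesis places them in $\FN^{(p+q-1)}$, $\FN^{(p+q-1)}$, and $\FN^{(p+q-2)}$ respectively; applying $\lc\ \rc$ once, once, and twice then lands all three summands in $\FN^{(p+q)}$ via $\deg(\lc u\rc) = \deg(u)+1$. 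This bookkeeping---checking that the three summands, each weighted by its number of surrounding brackets, all reach the common degree $p+q$---is the only genuine obstacle, and it is exactly where the additivity of the bracket on $\deg$ is indispensable.

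Finally, for breadth at least two I would invoke Eq.~(\mref{eq:cdiam}): writing the standard decompositions $w = w_1\cdots w_m$ and $w' = w_1'\cdots w_{m'}'$, the product is $w_1\cdots w_{m-1}(w_m \diamond w_1')w_2'\cdots w_{m'}'$, where the single interior product $w_m \diamond w_1'$ is governed by the breadth-one case just settled (hence lies in $\FN^{(\deg(w_m)+\deg(w_1'))}$) and everything else is concatenation. Applying additivity of $\deg$ termwise to each monomial of $w_m \diamond w_1'$ and adding the degrees of the fixed prefix $w_1\cdots w_{m-1}$ and suffix $w_2'\cdots w_{m'}'$ yields total degree $\sum_i \deg(w_i) + \sum_j \deg(w_j') = \deg(w) + \deg(w') = p+q$, completing both inductions.
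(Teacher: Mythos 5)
Your proof is correct, and it gets to the conclusion by a route organized differently from the paper's. The paper runs its outer induction on the degree sum $s=p+q$ and its inner induction on the sum of widths $t=\w(w)+\w(w')$, peeling the factors $w_m$ and $w'_1$ off the alternating factorizations of Proposition~\mref{pp:cdiam}; you instead induct on the depth sum $\dep(w)+\dep(w')$ and then split by breadth, following the recursive definition of $\diamond$ through Eqs.~(\mref{eq:Bdia}) and~(\mref{eq:cdiam}) exactly as in the proof of Lemma~\mref{lem:calgh}. Both schemes funnel everything into the same two facts, additivity of $\deg$ under concatenation and $\deg(\lc u\rc)=\deg(u)+1$, and into the same critical computation for the bracket--bracket case of Eq.~(\mref{eq:Bdia}). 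A point in your favour is that you treat that case explicitly: the three interior products land in $\FN^{(p+q-1)}$, $\FN^{(p+q-1)}$ and $\FN^{(p+q-2)}$ before the brackets are reapplied, and your depth-sum induction legitimately applies to them since each has strictly smaller depth sum. The paper's case list for $t=\ell+1\geq 2$ only covers the situations where $\w(w)\geq 2$ or $\w(w')\geq 2$, leaving the width-$(1,1)$ bracket--bracket case --- the only place the Nijenhuis relation actually enters --- implicit. What the paper's degree-based induction buys is that the interior products automatically have strictly smaller degree sum, so no depth bookkeeping is needed; what your depth-based induction buys is that it tracks the well-founded recursion by which $\diamond$ is defined, so every product you invoke has already been constructed at an earlier stage of that recursion.
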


\begin{proof}
Let $w\in \FN^{(p)}$ and $w'\in \FN^{(q)}$ be basis elements in $\frakX_\infty$. By Proposition~\mref{pp:cdiam}, we may suppose
$$w =w_{1}~\diamond~ \cdots ~\diamond~w_{m}\,\text{ and }\, w'=w'_{1}~ \diamond~\cdots ~\diamond~w'_{m'},$$
where $\w(w)=m$, $\w(w')=m'$  and $m,m'\geq 0$.
We prove the result by induction on the sum $s:=p+q\geq 0$.
For the initial step of $s=0$, we have $p=q=0$ and so $w=w'=\bfone$ by Eq.~(\mref{eq:subset}). Hence
$$w~\diamond~w'= \bfone \in \FN^{(0)} = \FN^{(p+q)}.$$

For the induction step, assume the result is true for the case of $s\leq k$ for a $k\geq 0$, and consider the case of $s=k+1$.
Under this assumption, we reduce to prove the result by induction on the sum of widths $t:=m+m'$.
Since $s= k+1\geq 1$, we have $t = m+m'\geq 1$.
If $t=1$, then either $m=0$ and $m'=1$ or $m=1$ and $m'=0$. Without loss of generality, let $m=0$ and $m'=1$.
Then $w=\bfone$, $p=0$ and so
$$w~\diamond~ w' = w' \in \FN^{(q)} = \FN^{(p+q)}.$$

Assume the result is valid for the case of $s=k+1$ and $t\leq\ell$, and consider the case of $s=k+1$ and $t=\ell+1\geq 2$.
We have three cases to consider
\begin{enumerate}
\item $\w(w)\geq 2, \w(w')\geq 2$;
\item $\w(w)\geq 2, \w(w')=1$;
\item $\w(w)=1, \w(w')\geq 2$.
\end{enumerate}
Here we only supply the explicit proof of the first case, because other cases are easier and can be checked similarly.
Suppose
$$\w(w)\geq2 \,\text{ and }\, \w(w')\geq2,$$
and denote by
$$\widetilde{w} := w_{1}~\diamond~ \cdots ~\diamond~w_{m-1}\,\text{ and }\, \widetilde{w}': = w'_{2}~\diamond~\cdots ~\diamond~w'_{m'}.$$
Then
$$w =\widetilde{w}~\diamond~ w_m \,\text{ and }\,  w' = w'_1 ~\diamond~\widetilde{w}' $$
and
\begin{equation}
p = \deg(w)=\deg(\widetilde{w})+\deg(w_m)\,\text{ and }\,
q = \deg(w')=\deg(w'_1)+\deg(\widetilde{w}').
\mlabel{eq:c0}
\end{equation}
By the associativity of the product $\diamond$ and Definition~\mref{de:alt},
$$w~\diamond~w' =(\widetilde{w}~\diamond~ w_m)~\diamond~(w'_1~\diamond~ \widetilde{w}')
= \widetilde{w}~\diamond~ (w_m~\diamond~w'_1) ~\diamond~\widetilde{w}' = \widetilde{w}~(w_m~\diamond~w'_1)~\widetilde{w}',$$
which implies
\begin{align*}
w~\diamond~w'
&= \widetilde{w}~(w_m~\diamond~w'_1)~\widetilde{w}'\\
&\in  \widetilde{w}~\FN^{(\deg(w_m)+\deg(w'_1))}~\widetilde{w}' \quad (\text{by induction hypothesis on $t$})\\
&\subseteq \FN^{( \deg(\widetilde{w})+\deg(w_m)+\deg(w'_1)
+\deg(\widetilde{w}'))} \quad (\text{by Eq.~(\mref{eq:ddeg}})) \\
&= \FN^{(p+q)} \quad(\text{by Eq.~(\mref{eq:c0})}).
\end{align*}
This finishes the induction on $t$ and hence the induction on $s$.
\end{proof}

\begin{lemma}
Let $X$ be a set and $\FNA$ the free Nijenhuis algebra on $X$. Then
$$\Delta(\FN^{(n)})\subseteq (\FN^{(0)}\ot \FN^{(n)}) \oplus (\bigoplus\limits^{}_{p+q=n\atop p>0,\, q>0}\FN^{(p)}\ot \FN^{(q)}) \,\text{ for all }\, n\geq 0.$$
\mlabel{lem:cmgrad}
\end{lemma}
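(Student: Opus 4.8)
The plan is to reduce to basis elements and then induct along the same scheme used to define $\Delta$. Since $\Delta$ is linear and $\FN^{(n)}$ is spanned by the $w\in\frak X_\infty$ with $\deg(w)=n$, it suffices to show $\Delta(w)\in T^{(n)}$ for each such $w$, where I abbreviate $T^{(n)}:=(\FN^{(0)}\ot\FN^{(n)})\oplus\bigoplus_{p+q=n,\,p,q>0}\FN^{(p)}\ot\FN^{(q)}$. The first thing I would record is a reformulation of the target: for $n\geq 1$ one has $T^{(n)}=\bigoplus_{p+q=n,\,q\geq 1}\FN^{(p)}\ot\FN^{(q)}$, i.e.\ $T^{(n)}$ is exactly the part of $\FNA\ot\FNA$ that is homogeneous of total degree $n$ and all of whose terms have \emph{second} tensor factor of positive degree (while $T^{(0)}=\FN^{(0)}\ot\FN^{(0)}$). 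Carrying the ``second factor has positive degree'' condition, rather than the raw direct-sum description, is what makes the product step close.

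Following the inductive definition of $\Delta$, I would argue by induction on the depth $\dep(w)$ and, within each depth, by a secondary induction on the width $\w(w)$, exactly as in Lemmas~\mref{lem:calgh}, \mref{lem:coasso} and~\mref{lem:counitt}. For the base case $\dep(w)=0$ we have $w\in M(X)$, so $\Delta(w)=\bfone\ot w$ by Lemma~\mref{lem:delw}, which lies in $\FN^{(0)}\ot\FN^{(\deg w)}\subseteq T^{(n)}$ (taking $w=\bfone$ when $n=0$). In the inductive step at depth $k+1$, the width-$1$ subcase has $w=\lc\lbar{w}\rc=\NA(\lbar{w})$ with $\lbar{w}$ of depth $k$ and $\deg(\lbar{w})=n-1$; then $\Delta(w)=(\id\ot\NA)\Delta(\lbar{w})$ by Eq.~(\mref{eq:Tree}), and since $\NA$ sends $\FN^{(q)}$ into $\FN^{(q+1)}$ by Eq.~(\mref{eq:subset}), applying $\id\ot\NA$ carries a term $\FN^{(p)}\ot\FN^{(q)}$ of $T^{(n-1)}$ to $\FN^{(p)}\ot\FN^{(q+1)}$, of total degree $n$ and with second factor of degree $q+1\geq 1$; hence $\Delta(w)\in T^{(n)}$.

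The heart of the argument is the width-$\geq 2$ step. Writing $w=w_1'\diamond w_2'$ with $\w(w_i')<\w(w)$ and $n_i:=\deg(w_i')$, $n_1+n_2=n$, Lemma~\mref{lem:calgh} gives $\Delta(w)=\Delta(w_1')\diamond\Delta(w_2')$. As the multiplication on $\FNA\ot\FNA$ is componentwise, a typical term is $(\FN^{(a)}\diamond\FN^{(c)})\ot(\FN^{(b)}\diamond\FN^{(d)})\subseteq\FN^{(a+c)}\ot\FN^{(b+d)}$ by Lemma~\mref{lem:mgrad}, of total degree $(a+b)+(c+d)=n_1+n_2=n$, which yields homogeneity at once. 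To actually land in $T^{(n)}$ I must exclude a component in $\FN^{(n)}\ot\FN^{(0)}$, i.e.\ a second factor of degree $0$; this is precisely where the reformulation is used. Because $w_1'$ is a nonempty sub-product of the alternating factorization it satisfies $w_1'\neq\bfone$, so $n_1\geq 1$ and the induction hypothesis puts $\Delta(w_1')$ in $T^{(n_1)}$, forcing $b\geq 1$ in every term; hence $b+d\geq 1$ and $\Delta(w)\in T^{(n)}$. The two remaining subcases ($\w(w_1')\geq2,\w(w_2')=1$ and $\w(w_1')=1,\w(w_2')\geq2$) are handled identically and I would only remark that they are easier.

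The main obstacle, and the only point beyond bookkeeping, is exactly the $q>0$ clause of the target: showing that $\Delta$ never produces a component whose second tensor factor is a scalar (for $w\neq\bfone$). It rests on two facts pulling in the same direction --- that $\NA$ strictly raises degree, which settles the width-$1$ step, and that in any width-$\geq2$ splitting the left factor $w_1'$ is nontrivial, which settles the product step --- together with Lemma~\mref{lem:mgrad} to pin down total degrees. Everything else (homogeneity and the base case) is a direct transcription of the grading estimates already established for $\diamond$, so I expect no further difficulty.
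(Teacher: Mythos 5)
Your proposal is correct and follows essentially the same route as the paper: a double induction tracking the recursive definition of $\Delta$, with the one-cocycle identity $\Delta\NA=(\id\ot\NA)\Delta$ and the degree-raising of $\NA$ handling width $1$, and Lemma~\mref{lem:calgh} together with Lemma~\mref{lem:mgrad} handling width $\geq 2$. The only (harmless) differences are that you run the outer induction on $\dep(w)$ where the paper uses $\deg(w)$, and your reformulation of the target as ``all terms have second tensor factor of positive degree'' is a tidy shortcut for the explicit four-block expansion the paper carries out.
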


\begin{proof}
Let $w\in\frak X_{\infty}$ be a basis element of $\FNA$. We proceed to prove the result by induction on
$n=\deg(w)\geq 0$.
For the initial step of $n=\deg(w)=0$, we have $w=\bfone$ and so by Eq.~(\mref{eq:Init})
$$\Delta(\bfone) = \bfone\ot \bfone \in \FN^{(0)} \ot \FN^{(0)}.$$
For the induction step, assume the result is true for the case of $n\leq k$ for a $k\geq 0$, and consider
the case of $n=k+1$. Under this assumption, we reduce to prove the result
by induction on the width $m=\w( w)$. Since $n = \deg(w)\geq 1$, we have $w\neq \bfone$ and so
$m=\w(w)\geq 1$.

If $m=1$, then either $w = x$ for some $x\in X$ or $w= \NA(\lbar{ w})$ for some  $\lbar{ w}\in \frak X_\infty$. If $w=x$ for some $x\in X$,
then $n=\deg(w)=1$ and $$\Delta(w) = \bfone \ot x \in \FN^{(0)} \ot \FN^{(1)} = \FN^{(0)} \ot \FN^{(n)}.$$
If $ w=\NA(\lbar{ w})$ for some $\lbar{ w}\in \frak X_\infty$, then
$$\deg(\lbar{w}) = \deg(w)-1 =n-1= k.$$
Using the induction hypothesis on $n$, we get
$$\Delta(\lbar{ w})\in (\FN^{(0)}\ot \FN^{(k)}) \oplus (\bigoplus\limits^{}_{p+q=k\atop p>0,\, q>0}\FN^{(p)}\ot \FN^{(q)}).$$
By Eqs.~(\mref{eq:Tree}) and~(\mref{eq:subset}),
\begin{align*}
\Delta( w)=\Delta(\NA(\lbar{ w})) &= (\id\ot \NA)\Delta(\lbar{ w})\\
&\in \Big(\FN^{(0)}\ot \NA(\FN^{(k)}) \Big) \oplus \Big(\bigoplus\limits^{}_{p+q=k\atop p>0,\, q>0}\FN^{(p)}\ot \NA(\FN^{(q)})\Big) \\
& \subseteq   (\FN^{(0)}\ot \FN^{(k+1)}) \oplus (\bigoplus\limits^{}_{p+q=k+1\atop p>0,\, q>1}\FN^{(p)}\ot \FN^{(q)})\\
& \subseteq   (\FN^{(0)}\ot \FN^{(k+1)}) \oplus (\bigoplus\limits^{}_{p+q=k+1\atop p>0,\, q>0}\FN^{(p)}\ot \FN^{(q)}).
\end{align*}
Assume the result holds for the case of $n=k+1$ and $m\leq\ell$, and consider the case of $n=k+1$ and $m=\ell+1\geq 2$.
From Proposition~\mref{pp:cdiam}, we may write
$$ w=u~\diamond~ v,\,\text{ where }\, u, v\in \frak X_\infty\,\text{ and }\, 0< \w( u), \w(v) < \ell+1.$$
Using the induction hypothesis on $m$, we get
$$\Delta(u)\in (\FN^{(0)}\ot \FN^{(\deg(u))}) \oplus (\bigoplus\limits^{}_{p+q=\deg(u)\atop p>0,\,q>0}\FN^{(p)}\ot \FN^{(q)})$$
and
$$\Delta(v)\in  (\FN^{(0)}\ot \FN^{(\deg(v))}) \oplus ( \bigoplus\limits^{}_{p'+q'=\deg(v)\atop p'>0,\,q'>0}\FN^{(p')}\ot \FN^{(q')}).$$
Since $w = u~\diamond~v = uv$, we have
\begin{equation}
\deg( u)+\deg(v) = \deg( w) = n = k+1.
\mlabel{eq:deguv}
\end{equation}
Thus we obtain
\begin{align*}
\Delta( w)=&\ \Delta( u)~\diamond~ \Delta( v)\\
\in& \ \left( (\FN^{(0)}\ot \FN^{(\deg(u))}) \oplus (\bigoplus\limits^{}_{p+q=\deg(u)\atop p>0,\,q>0}\FN^{(p)}\ot \FN^{(q)})\right)~\diamond~
\left( (\FN^{(0)}\ot \FN^{(\deg(v))}) \oplus ( \bigoplus\limits^{}_{p'+q'=\deg(v)\atop p'>0,\,q'>0}\FN^{(p')}\ot \FN^{(q')})\right)\\
=&\ (\FN^{(0)}~\diamond~\FN^{(0)}) \ot (\FN^{(\deg(u))}~\diamond~\FN^{(\deg(v))}) \oplus
\left(\bigoplus\limits^{}_{p'+q'=\deg(v)\atop p'>0,\,q'>0}(\FN^{(0)}~\diamond~\FN^{(p')}) \ot (\FN^{(\deg(u))}~\diamond~\FN^{(q')}) \right)\\
&\oplus \left(\bigoplus\limits^{}_{p+q=\deg(u)\atop p>0,\,q>0}(\FN^{(p)}~\diamond~\FN^{(0)}) \ot (\FN^{(q)}~\diamond~\FN^{(\deg(v))}) \right)\\
&\oplus \left( \bigoplus\limits^{}_{p+q = \deg(u), \, p'+q'= \deg(v)\atop p>0,\,q>0, p'>0,\,q'>0}\left(\FN^{(p)}~\diamond~\FN^{(p')}\right)\ot \left(\FN^{(q)}~\diamond~\FN^{(q')}\right) \right) \\
%
%
\subseteq &\ (\FN^{(0)} \ot \FN^{(n)} ) \oplus  ( \bigoplus\limits^{}_{p+q= n \atop p>0, q>0}\FN^{(p)}\ot \FN^{(q)}) \,\quad\text{ (by Eqs.~(\mref{eq:mgrad}) and~(\mref{eq:deguv}))}.
\end{align*}
This completes the induction on $\w(w)$ and hence the induction on $\deg(w)$.
\end{proof}

Now we arrive at the main result of the paper.

\begin{theorem}
Let $X$ be a set and $\FNA$ the free Nijenhuis algebra on $X$. Then the quintuple $(\FNA, \diamond, \NA, \Delta, \varepsilon)$ is a connected graded left counital bialgebra and hence a left counital Hopf algebra.
\mlabel{thm:Hfree}
\end{theorem}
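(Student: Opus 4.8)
The plan is to establish that $(\FNA,\diamond,u,\Delta,\vep)$ is a \emph{connected graded} left counital bialgebra with respect to the degree grading, and then to invoke Lemma~\mref{lem:conn} to upgrade it to a left counital Hopf algebra. The bialgebra structure itself is already in hand: by Theorem~\mref{thm:main} the sextuple $(\FNA,\diamond,u,\Delta,\vep,\NA)$ is a left counital cocycle bialgebra, so in particular $(\FNA,\diamond,u,\Delta,\vep)$ is a left counital bialgebra. Thus nothing further needs to be checked for the underlying algebra and left counital coalgebra axioms, nor for $\Delta$ and $\vep$ being algebra homomorphisms, all of which were verified in Lemmas~\mref{lem:calgh}, \mref{lem:coasso}, \mref{lem:counitt} and~\mref{lem:counitprod}.

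Next I would verify the three grading axioms for the decomposition $\FNA=\bigoplus_{n\geq0}\FN^{(n)}$ determined by the degree function of Eq.~(\mref{eq:ddeg}). The direct sum decomposition is recorded in Eq.~(\mref{eq:subset}); the multiplicative compatibility $\FN^{(p)}\diamond\FN^{(q)}\subseteq\FN^{(p+q)}$ is precisely Lemma~\mref{lem:mgrad}; and the comultiplicative compatibility
\[
\Delta(\FN^{(n)})\subseteq (\FN^{(0)}\ot\FN^{(n)})\oplus\Big(\bigoplus_{p+q=n,\,p>0,\,q>0}\FN^{(p)}\ot\FN^{(q)}\Big)
\]
is precisely Lemma~\mref{lem:cmgrad}. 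Hence $(\FNA,\diamond,u,\Delta,\vep)$ is a graded left counital bialgebra.

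It then remains to check connectedness. The identity $\FN^{(0)}=\im u=\bfk$ is part of Eq.~(\mref{eq:subset}), since $\bfone$ is the unique basis element of degree $0$. For the counit condition $\ker\vep=\bigoplus_{n\geq1}\FN^{(n)}$, I would argue directly from the defining formula of $\vep$ in Eq.~(\mref{eq:vep}): every basis element $w\in\frakX_\infty\setminus\{\bfone\}$ satisfies $\deg(w)\geq1$ and $\vep(w)=0$, while $\vep(\bfone)=1_\bfk\neq0$ and $\bfone$ spans $\FN^{(0)}$; hence $\ker\vep$ is exactly the span of the basis elements of positive degree, namely $\bigoplus_{n\geq1}\FN^{(n)}$. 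This makes $\FNA$ a connected graded left counital bialgebra, and Lemma~\mref{lem:conn} then furnishes a right antipode, so $\FNA$ is a left counital Hopf algebra.

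Because the substantive work has already been carried out in the preceding lemmas, this final argument is essentially a matter of assembling those results against the definition of a connected graded left counital bialgebra, and I expect no genuine obstacle. Were the statement proved from scratch, the main difficulty would be the comultiplicative grading compatibility of Lemma~\mref{lem:cmgrad}, whose induction must track both how $\NA$ raises degree by one and how $\diamond$ distributes over the graded tensor decomposition; within the present assembly, the only clause requiring explicit attention is connectedness, where one need only observe that $\bfone$ is the sole degree-$0$ basis element.
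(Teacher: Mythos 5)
Your proposal is correct and follows essentially the same route as the paper: cite Theorem~\mref{thm:main} for the left counital bialgebra structure, Eq.~(\mref{eq:subset}) together with Lemmas~\mref{lem:mgrad} and~\mref{lem:cmgrad} for the grading, Eqs.~(\mref{eq:vep}) and~(\mref{eq:subset}) for connectedness, and Lemma~\mref{lem:conn} to obtain the right antipode. The only difference is that you spell out the verification of $\ker\vep=\bigoplus_{n\geq1}\FN^{(n)}$, which the paper leaves implicit.
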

\begin{proof}
The $(\FNA, \diamond, \NA, \Delta, \varepsilon)$ is a left counital bialgebra by Theorem~\mref{thm:main}, and
further is graded by Eq.~(\mref{eq:subset}) and  Lemmas~\mref{lem:mgrad}, ~\mref{lem:cmgrad}.
The connectedness follows from Eqs.~(\mref{eq:vep}) and~(\mref{eq:subset}). Therefore the result is valid from Lemma~\mref{lem:conn}.
\end{proof}

\smallskip

\noindent {\bf Acknowledgements}: This work was supported by the National Natural Science Foundation of
China (Grant No.~11501466), Fundamental Research Funds for the Central
Universities (Grant No.~lzujbky-2017-162), and the Natural Science Foundation of Gansu Province (Grant
No.~17JR5\\RA175) and Shandong Province (No. ZR2016AM02).

We thank the anonymous referee for valuable suggestions helping to improve the paper.

\end{document}